\theoremstyle{plain}
\newtheorem{theorem}{Theorem}[section]
\newtheorem{lemma}[theorem]{Lemma}
\newtheorem{corollary}[theorem]{Corollary}
\newtheorem{proposition}[theorem]{Proposition}
\theoremstyle{definition}
\newtheorem{definition}[theorem]{Definition}
\newtheorem{example}[theorem]{Example}
\theoremstyle{remark}
\newtheorem{remark}[theorem]{Remark}
\numberwithin{equation}{section}
\newcommand{\ap}{\alpha_p}
\newcommand{\aq}{\alpha_q}
\newcommand{\rd}{\mathrm{d}}
\newcommand{\dm}{\mathrm{d}\mu}
\newcommand{\F}{\mathrm{F}}
\newcommand{\X}{\mathrm{X}}
\newcommand{\N}{\mathbb{N}}
\newcommand{\R}{\mathbb{R}}
\begin{document}

\title[]{\boldmath On $\F$-spaces of almost-Lebesgue functions}

\author[]{Nuno J. Alves}
\address[N. J. Alves]{
      University of Vienna, Faculty of Mathematics, Oskar-Morgenstern-Platz 1, 1090 Vienna, Austria.}
\email{nuno.januario.alves@univie.ac.at}

\begin{abstract}
We consider the space of functions almost in $L_p$ and endow it with the topology of asymptotic $L_p$-convergence. This yields a completely metrizable topological vector space which, on finite measure spaces, coincides with the space of measurable functions equipped with the topology of (local) convergence in measure. We investigate analogs of classical results such as dominated convergence and Vitali convergence theorems. For $\mathbb{R}^d$ as the underlying measure space, we establish results on approximation by smooth functions and separability. Further aspects, including local boundedness, local convexity, and duality are examined in the $\mathbb{R}^d$ setting, revealing fundamental differences from standard $L_p$ spaces.
\end{abstract}

\subjclass[2020]{28A20, 46A16, 54G05}

\keywords{asymptotic $L_p$-convergence, almost $L_p$ spaces, $\F$-spaces, (local) convergence in measure}

\maketitle
\thispagestyle{empty} 

\section{Introduction}
Let $(\mathrm{X}, \Sigma, \mu)$ be a measure space. The space of real-valued measurable functions on $\X$ (identified $\mu$-a.e.) is denoted by $L_0(\X)$. Such functions are simply called \textit{measurable}. For each $p\in [1,\infty)$, the Lebesgue space of measurable functions $f$ satisfying $\| f\|_p^p = \int_{\X} |f|^p \, \rd \mu < \infty$ is denoted by $L_p(\X)$. Recall that a sequence of measurable functions $(f_n)$ is said to \textit{converge in measure} to a measurable function $f$ if for each $\delta > 0$, $\mu(|f_n-f| > \delta) \to 0$ as $n \to \infty$, and it is said to \textit{converge locally in measure} to $f$ if it converges in measure to $f$ on every measurable subset of $\X$ of finite measure. It is well known that the topology of convergence in measure on $L_0(\X)$ is metrizable, and that the topology of local convergence in measure is generated by a family of pseudometrics \cite{fremlin2003measure, bogachev2007measure}.
\par 
In this work, we introduce a complete metric vector space of measurable functions, which, on finite measure spaces, coincides with $L_0(\X)$ equipped with the topology of (local) convergence in measure. For each $p\in [1,\infty)$, we consider a space $\Lambda_p(\X)$ defined as
\begin{equation} \label{Ap}
\Lambda_p(\X) = \left\{ f \in L_0(\X) \ | \ \forall \, \delta>0 \ \exists \, E_\delta \in \Sigma \ : \ \mu(E_\delta) < \delta \ \text{and} \ f\chi_{E_\delta^c} \in L_p(\X) \right\}
\end{equation}
where $\chi_E$ denotes the characteristic function of a set $E \subseteq \X$. The space $\Lambda_p(\X)$, called \textit{almost $L_p$ space}, was initially considered in \cite{bravo2012optimal, calabuig2019representation} in a context of Banach function spaces. 
 We endow this space with a functional $\| \cdot \|_{\ap}$ defined by 
\begin{equation} \label{Fnorm}
\|f \|_{\ap} = \| \min(|f|,1)  \|_p
\end{equation}
for every measurable function $f$. For $p=1$, we shorten the notation to $\|\cdot \|_\alpha$.  This functional is known to define an extended $\F$-norm \cite{shapiro1972convexity}, and becomes finite when restricted to $\Lambda_p(\X)$, thus providing an $\F$-norm for this space; see Definition \ref{def_fnorm} and Proposition \ref{prop_fnorm} in the appendix. Consequently, $\mathrm{d}_{\ap}(f,g) \coloneq \|f - g \|_{\ap}$ defines a translation-invariant metric on $\Lambda_p(\X)$. An \textit{F-space} is a metrizable topological vector space which is complete with respect to a translation-invariant metric \cite{kalton1984space}. Our first main result is the following:
\begin{theorem} \label{mainthm1}
For each $p\in [1,\infty)$, the space $\Lambda_p(\X)$, endowed with the $\F$-norm $\| \cdot \|_{\ap}$, is an $\F$-space with the topology of asymptotic $L_p$-convergence. Moreover, if $\X$ has finite measure, then $\Lambda_p(\X)$ coincides with $L_0(\X)$, equipped with the topology of (local) convergence in measure. 
\end{theorem}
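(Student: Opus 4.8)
The plan is to handle the algebraic and metric structure by citation---by Proposition \ref{prop_fnorm} the extended $\F$-norm $\|\cdot\|_{\ap}$ is finite on $\Lambda_p(\X)$, so $\Lambda_p(\X)$ is a vector space and $\rd_{\ap}(f,g)=\|f-g\|_{\ap}$ is a translation-invariant metric inducing a vector topology---and to concentrate on the two substantive points: completeness, and the identification of this metric topology with asymptotic $L_p$-convergence. For completeness, let $(f_n)$ be $\rd_{\ap}$-Cauchy. Since $\{\min(|g|,1)>\delta\}=\{|g|>\delta\}$ for $\delta\in(0,1)$, Chebyshev's inequality gives $\mu(|f_n-f_m|>\delta)\le\delta^{-p}\|f_n-f_m\|_{\ap}^{p}$, so $(f_n)$ is Cauchy in (global) measure. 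I would then extract a subsequence $(f_{n_k})$ with $\mu(|f_{n_{k+1}}-f_{n_k}|>2^{-k})<2^{-k}$; Borel--Cantelli yields an a.e.\ finite pointwise limit $f\in L_0(\X)$, and the dyadic-tail bound $\mu(|f_{n_k}-f|>2^{-k+1})\le 2^{-k+1}$ shows $f_{n_k}\to f$ in measure---this explicit estimate is needed because a.e.\ convergence alone does not force convergence in measure when $\mu(\X)=\infty$. Fixing $n$ large and letting $k\to\infty$, Fatou's lemma applied to $\min(|f_n-f_{n_k}|,1)^p$ gives $\|f_n-f\|_{\ap}\le\limsup_m\|f_n-f_m\|_{\ap}$, hence $\rd_{\ap}(f_n,f)\to 0$. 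Finally $f\in\Lambda_p(\X)$: by subadditivity of the extended $\F$-norm, $\|f\|_{\ap}\le\|f-f_n\|_{\ap}+\|f_n\|_{\ap}<\infty$ for large $n$, and finiteness of $\|\cdot\|_{\ap}$ forces membership in $\Lambda_p(\X)$---splitting $\int\min(|f|,1)^p\,\rd\mu$ over $\{|f|\le1\}$ and its complement shows $\mu(|f|>1)<\infty$, whence $\mu(|f|>N)\downarrow0$ and $E_\delta=\{|f|>N\}$ works for $N$ large.

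For the topology, I would verify that $\rd_{\ap}(f_n,f)\to0$ is equivalent to asymptotic $L_p$-convergence of $(f_n)$ to $f$: that for every $\delta>0$ there exist sets $E_n$ with $\mu(E_n)<\delta$ eventually and $\|(f_n-f)\chi_{E_n^c}\|_p\to0$. If $\rd_{\ap}(f_n,f)\to0$, set $a_n=\|f_n-f\|_{\ap}^p\to0$ and take $E_n=\{|f_n-f|>a_n^{1/(2p)}\}$ (with $E_n=\emptyset$ when $a_n=0$); Chebyshev gives $\mu(E_n)\le a_n^{1/2}\to0$, while on $E_n^c$ one has $|f_n-f|\le a_n^{1/(2p)}\le1$ eventually, so $\|(f_n-f)\chi_{E_n^c}\|_p^p=\int_{E_n^c}\min(|f_n-f|,1)^p\,\rd\mu\le a_n\to0$. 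Conversely, splitting the integral over $E_n$ and $E_n^c$ yields $\|f_n-f\|_{\ap}^p\le\mu(E_n)+\|(f_n-f)\chi_{E_n^c}\|_p^p$, whose $\limsup$ is $\le\delta$; letting $\delta\downarrow0$ concludes. Since $\Lambda_p(\X)$ is metrizable, agreement of convergent sequences identifies the two topologies.

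For the finite-measure statement, first note $\Lambda_p(\X)=L_0(\X)$ as sets: if $\mu(\X)<\infty$ and $f\in L_0(\X)$ then $\mu(|f|>N)\downarrow0$, so $f\chi_{\{|f|\le N\}}\in L_p(\X)$ (it is bounded on a finite-measure space) exhibits the set $E_\delta=\{|f|>N\}$ required by \eqref{Ap}. When $\mu(\X)<\infty$, local convergence in measure coincides with convergence in measure, so it remains to check that $\rd_{\ap}$ metrizes convergence in measure: one direction is again Chebyshev, and for the other, writing $g_n=\min(|f_n-f|,1)\in[0,1]$ and taking any $\delta\in(0,1)$, one has $\int g_n^p\,\rd\mu\le\delta^p\mu(\X)+\mu(|f_n-f|>\delta)$, so $\limsup_n\|f_n-f\|_{\ap}^p\le\delta^p\mu(\X)$, and $\delta\downarrow0$ finishes---this is the one place where $\mu(\X)<\infty$ is indispensable.

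The main obstacle is the completeness argument, and within it the two points where the possible lack of $\sigma$-finiteness bites: building the limit so that the chosen subsequence converges in measure (handled by the explicit dyadic estimate, not by Egorov), and verifying a posteriori that the limit actually lies in $\Lambda_p(\X)$ rather than merely having finite $\F$-distance to each $f_n$ (handled by subadditivity together with the characterization of $\Lambda_p(\X)$ as the finiteness domain of $\|\cdot\|_{\ap}$). Everything else reduces to Chebyshev's inequality and to splitting $\int\min(|f|,1)^p\,\rd\mu$ across a super-level set, applied repeatedly.
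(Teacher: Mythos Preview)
Your argument is correct, and it diverges from the paper's in two substantive ways. First, for completeness the paper outsources the hard work to \cite{alves2024mode}: it quotes that every $\ap$-Cauchy sequence $\ap$-converges to some measurable $f$, then passes to a subsequence converging \emph{almost in $L_p$} and uses that subsequence, together with membership of each $g_N$ in $\Lambda_p(\X)$, to place $f$ in $\Lambda_p(\X)$. You instead give a self-contained proof: Chebyshev to get Cauchy-in-measure, a dyadic subsequence and Borel--Cantelli to produce an a.e.\ limit, and Fatou on $\min(|f_n-f_{n_k}|,1)^p$ to close the distance. Second, to check $f\in\Lambda_p(\X)$ you use the clean characterization $\Lambda_p(\X)=\{f:\|f\|_{\ap}<\infty\}$, obtained by observing that finiteness of the $\F$-norm forces $\mu(|f|>1)<\infty$, hence $\mu(|f|>N)\downarrow0$, and that $\int_{\{|f|\le N\}}|f|^p\le\|f\|_{\ap}^p+N^p\mu(|f|>1)<\infty$. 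This is sharper than the paper's Proposition~2.5, whose additional hypothesis is in fact redundant by exactly this computation. Your route is more elementary and fully self-contained; the paper's route has the virtue of tying the argument to the ``almost in $L_p$'' convergence notion that recurs throughout the article. A minor remark: your aside that $f_{n_k}\to f$ in measure is not actually needed once you invoke Fatou via a.e.\ convergence, and your handling of scalar-multiplication continuity by citing general $\F$-norm theory replaces the paper's explicit Lemma in Section~\ref{section_proof_thm}.
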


Asymptotic $L_p$-convergence was introduced in \cite{alves2024mode}, motivated by a question in diffusive relaxation. A sequence of measurable functions $(f_n)$ is said to \textit{asymptotically $L_p$-converge} (in short, \textit{$\ap$-converge}) to a measurable function $f$ if there exists a sequence of measurable sets $(B_n)$ with $\mu (B_n^c) \to 0$ as $n \to \infty$ such that 
\begin{equation} \label{alpha_conv}
\int_{B_n} |f_n - f|^p \, \mathrm{d}\mu \to 0 \quad \text{as} \ n \to \infty \, .
\end{equation}
\par

As shown in Theorem~\ref{mainthm1}, this work contributes a new example to the general theory of $\F$-spaces. Apart from $L_0(\X)$ when $\X$ has finite measure, other classical examples of $\F$-spaces include, for $0 < p < 1$, the Lebesgue spaces $L_p(\X)$, the sequence spaces $l_p$, and the Hardy spaces $H^p$ of analytic functions; see \cite{kalton1984space, rolewicz1984metric} and references therein. Since $\Lambda_p(\X)$ coincides with the space $L_0(\X)$ when $\mu(\X) < \infty$, this opens an interesting line of research aimed at understanding which properties of $\Lambda_p(\X)$, when $\mu(\X) = \infty$, align or diverge from those of $L_0(\X)$ when $\mu(\X) < \infty$. For extensive literature on $L_0(0,1)$ we refer to \cite{kalton1978endomorphisms, kalton1979quotients, kalton1981rigid, peck1981l0, kalton1984operators, kalton1985banach, faber1995lifting}. 
\par
Moreover, the space $\Lambda_p(\X)$ is complete and metrizable on any measure space, with its topology generated by an $\F$-norm. In contrast, $L_0(\X)$ with the topology of convergence in measure is also complete and metrizable, but its topology comes from a functional weaker than an $\F$-norm and involves a less practical metric. Furthermore, the topology of local convergence in measure on $L_0(\X)$ is only metrizable when the measure space is $\sigma$-finite; see Section~\ref{section_conv_measure} for details. Our construction avoids these limitations and provides a natural alternative setting for working with measurable functions on general measure spaces.

A notable feature of the spaces $\Lambda_p(\X)$ is that, although they represent all measurable functions when $\X$ has finite measure, their almost-$L_p$ structure allows for many results that have classical counterparts in the theory of Lebesgue spaces. We obtain analogs of some fundamental classical results, such as the Lebesgue dominated convergence theorem and the Vitali convergence theorem (Sections~\ref{section_dominated} and \ref{section_vitali}). We also explore the case where the underlying measure space is $\mathbb{R}^d$, establishing results on approximation by smooth functions and separability (Section~\ref{section_separable}). Furthermore, we prove that $\Lambda_p(\mathbb{R}^d)$ is neither locally bounded nor locally convex, and that its dual space is trivial, highlighting key differences from Lebesgue spaces (Section~\ref{section_local}).
\par 
The next sections are organized as follows. Section~\ref{section_preliminaries} provides preliminary material. In Section~\ref{section_metrizability}, we establish the equivalence between $\ap$-convergence and convergence with respect to the $\F$-norm $\| \cdot \|_{\ap}$. Section~\ref{section_L_vs_AL} presents some basic results on the space $\Lambda_p(\X)$; in particular, we identify the condition under which a function in $\Lambda_p(\X)$ belongs to $L_p(\X)$. Section~\ref{section_almost_conv} recalls the notion of convergence almost in $L_p$, which is closely connected to $\ap$-convergence. In Section~\ref{section_conv_measure}, we review the topology of (local) convergence in measure on $L_0(\X)$ and compare it to the topology of $\ap$-convergence on $\Lambda_p(\X)$. The proof of Theorem~\ref{mainthm1} is given in Section \ref{section_proof_thm}.

\section{Preliminaries} \label{section_preliminaries}
The present research originated from the question of whether asymptotic $L_p$-convergence can be induced by a metric. This question is motivated by the observation that metric convergence notions always yield sequential convergence classes \cite{dudley1964sequential}, and that the space of measurable functions, equipped with asymptotic $L_p$-convergence, forms such a class \cite[Theorem 4.5]{alves2024mode}. \par 
A crucial insight toward answering this question was provided in \cite{alves2024relation}, where it was proved that, on finite measure spaces, convergence in measure and asymptotic $L_p$-convergence are equivalent. Furthermore, when $\X$ has finite measure, it is known that $\mathrm{d}_{\ap}(f,g) = \|f - g \|_{\ap}$ defines a metric on $L_0(\X)$ that generates the topology of (local) convergence in measure; see \cite[Exercise 4.7.60]{bogachev2007measure} where the case $p=1$ is addressed.  

\subsection{\texorpdfstring{\boldmath Metrizability of asymptotic $L_p$-convergence}{Metrizability of asymptotic L\unichar{"209A}-convergence}} \label{section_metrizability}

~\par
For general measure spaces, we prove that the extended metric induced by the functional $ \|\cdot \|_{\ap} = \| \min(|\cdot|,1)\|_p$ generates the topology of asymptotic $L_p$-convergence. We first characterize the asymptotic $L_p$-convergence of measurable functions in terms of convergence with respect to $\| \cdot \|_{\ap}$. 
\par 
\begin{proposition}\label{prop_alpha_metric}
Let $(f_n),f$ be measurable functions. Then $(f_n)$ $\ap$-converges to $f$ if, and only if, $\|f_n - f \|_{\ap} \to 0$ as $n \to \infty$.
\end{proposition}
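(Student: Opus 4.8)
The plan is to write $g_n = f_n - f$ and argue directly from the identity $\|g_n\|_{\ap}^p = \int_{\X} \min(|g_n|,1)^p \, \dm$, using the elementary fact that truncating at level $1$ is harmless where $|g_n|$ is small and costs at most the measure of the set where $|g_n|$ is not small. Both implications then reduce to a one‑line estimate after the sets $(B_n)$ are split off (forward direction) or constructed (converse direction).

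For the forward implication, suppose $(f_n)$ $\ap$‑converges to $f$, with witnessing sets $(B_n)$ satisfying $\mu(B_n^c)\to 0$ and $\int_{B_n}|g_n|^p\,\dm\to 0$. I would split $\int_{\X}\min(|g_n|,1)^p\,\dm$ over $B_n$ and $B_n^c$: on $B_n$ the integrand is bounded by $|g_n|^p$, so that contribution is at most $\int_{B_n}|g_n|^p\,\dm$; on $B_n^c$ the integrand is bounded by $1$, so that contribution is at most $\mu(B_n^c)$. Both tend to $0$, hence $\|f_n - f\|_{\ap}\to 0$.

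For the converse, suppose $a_n \coloneq \|g_n\|_{\ap}^p \to 0$. If $a_n = 0$ then $g_n$ vanishes $\mu$‑a.e. and we may take $B_n = \X$; so assume $a_n > 0$, and since $a_n \to 0$ we may discard the finitely many indices with $a_n \ge 1$ (this does not affect either limit, as $\ap$‑convergence is a tail property) and assume $0 < a_n < 1$. Set $\delta_n \coloneq a_n^{1/(2p)} \in (0,1)$ and $B_n \coloneq \{\,|g_n|\le \delta_n\,\}$. On $B_n$ we have $|g_n|\le \delta_n < 1$, so $\min(|g_n|,1) = |g_n|$ there, whence $\int_{B_n}|g_n|^p\,\dm = \int_{B_n}\min(|g_n|,1)^p\,\dm \le a_n \to 0$. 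On $B_n^c$ we have $\min(|g_n|,1) > \delta_n$ (since $|g_n| > \delta_n$ and $\delta_n < 1$), so $\delta_n^p\,\mu(B_n^c) \le \int_{B_n^c}\min(|g_n|,1)^p\,\dm \le a_n$, i.e. $\mu(B_n^c) \le a_n/\delta_n^p = a_n^{1/2} \to 0$. Thus $(B_n)$ witnesses that $(f_n)$ $\ap$‑converges to $f$.

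The only delicate point — and it is minor — is the choice of the truncation level $\delta_n$ in the converse direction: it must tend to $0$ (so that $\mu(B_n^c)\to 0$ is not vacuous) yet slowly enough that $\delta_n^p$ dominates $a_n$; the choice $\delta_n = a_n^{1/(2p)}$ balances these, and the degenerate case $a_n = 0$ (together with the boundedly many indices where $a_n \ge 1$) must be handled separately. No competing constraint arises from the other requirement $\int_{B_n}|g_n|^p\,\dm\to 0$, since that bound holds with room to spare for any $\delta_n \le 1$.
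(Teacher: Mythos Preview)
Your proof is correct. The forward direction is essentially identical to the paper's argument: split $\int_{\X}\min(|g_n|,1)^p\,\dm$ over $B_n$ and $B_n^c$ and bound by $\int_{B_n}|g_n|^p\,\dm + \mu(B_n^c)$.

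For the converse, your variable-threshold construction $B_n=\{|g_n|\le \delta_n\}$ with $\delta_n=a_n^{1/(2p)}$ works, but the paper's route is shorter. The paper observes that with the \emph{fixed} choice $B_n=\{|g_n|\le 1\}$ one has the exact identity
\[
\int_{B_n}|g_n|^p\,\dm + \mu(B_n^c) \;=\; \int_{\{|g_n|\le 1\}}|g_n|^p\,\dm + \mu(|g_n|>1) \;=\; \|g_n\|_{\ap}^p,
\]
so both summands tend to zero immediately, with no Chebyshev step and no case analysis for $a_n=0$ or $a_n\ge 1$. In particular, your remark that the threshold ``must tend to $0$'' is not quite right: the level $1$ is already adequate, precisely because truncation at $1$ is built into the definition of $\|\cdot\|_{\ap}$. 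Your approach would be the natural one if the functional were, say, $\int_{\X}\varphi(|g_n|)\,\dm$ for a general gauge $\varphi$ without a distinguished truncation level; here the structure of $\|\cdot\|_{\ap}$ hands you the optimal $B_n$ for free.
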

\begin{proof}
The result follows from the observation that for all measurable functions $f$ and $g$ one has
\begin{align*}
\int_{ \{|f-g| \leq 1 \} } |f-g|^p \, \mathrm{d}\mu + \mu(|f-g|> 1) & = \int_{\X} \min(|f-g|,1)^p \, \mathrm{d} \mu \\
&  \leq \int_B |f-g|^p \, \mathrm{d}\mu + \mu(B^c)
\end{align*}
for every measurable set $B$.
\end{proof}

Next, we characterize asymptotic $L_p$-Cauchy sequences in terms of Cauchy sequences with respect to $\| \cdot \|_{\ap}$. Recall that a sequence $(f_n)$ of measurable functions is said to be \textit{asymptotic $L_p$-Cauchy} (in short, \textit{$\ap$-Cauchy}) if there exists a sequence of measurable sets $(B_n)$ with $\mu (B_n^c) \to 0$ as $n \to \infty$ such that 
\[\int_{B_n \cap B_m} |f_n - f_m|^p \, \mathrm{d}\mu \to 0 \quad \text{as} \ n,m \to \infty \, .\]

\begin{proposition}
Let $(f_n)$ be a sequence of measurable functions. Then $(f_n)$ is $\ap$-Cauchy if, and only if, $\|f_n - f_m \|_{\ap} \to 0$ as $n,m \to \infty$.
\end{proposition}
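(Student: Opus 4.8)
The plan is to follow the pattern of Proposition~\ref{prop_alpha_metric}, using the same two-sided estimate
\[
\int_{\{|f-g|\le 1\}}|f-g|^p\,\rd\mu+\mu(|f-g|>1)=\int_\X\min(|f-g|,1)^p\,\rd\mu\le\int_B|f-g|^p\,\rd\mu+\mu(B^c),
\]
valid for all measurable $f,g$ and every measurable $B$, now applied with $f=f_n$ and $g=f_m$. For the forward implication I would simply take $B=B_n\cap B_m$ in the upper bound, where $(B_n)$ witnesses that $(f_n)$ is $\ap$-Cauchy, obtaining
\[
\|f_n-f_m\|_{\ap}^p\le\int_{B_n\cap B_m}|f_n-f_m|^p\,\rd\mu+\mu(B_n^c)+\mu(B_m^c),
\]
whose right-hand side tends to $0$ as $n,m\to\infty$.

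The converse is the substantial direction. Assuming $\|f_n-f_m\|_{\ap}\to 0$, I would first extract a strictly increasing sequence $(n_k)$ with $\|f_a-f_b\|_{\ap}^p<2^{-k(p+1)}$ whenever $a,b\ge n_k$. A Chebyshev-type estimate for $\min(|\cdot|,1)^p$ then gives $\mu(\{|f_{n_k}-f_{n_{k+1}}|>2^{-k}\})\le 2^{-k}$, a summable series; hence, with $F_k:=\bigcup_{j\ge k}\{|f_{n_j}-f_{n_{j+1}}|>2^{-j}\}$, the family $(F_k)$ is decreasing, $\mu(F_k)\to 0$, and on $\X\setminus F_k$ the tail $(f_{n_i})_{i\ge k}$ is uniformly Cauchy with a geometric modulus. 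I would let $g$ be its pointwise limit, defined (and measurable) off a null set, with $|f_{n_k}-g|\le 2^{-k+1}$ on $\X\setminus F_k$. Applying the same Chebyshev bound to $f_n-f_{n_k}$ and splitting through $g$ then shows, by choosing $k$ large before letting $n\to\infty$, that $\mu(|f_n-g|>1/2)\to 0$ as $n\to\infty$.

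The candidate witnessing sets are then $B_n:=\{|f_n-g|\le 1/2\}$, for which $\mu(B_n^c)\to 0$ and $B_n\cap B_m\subseteq\{|f_n-f_m|\le 1\}$ by the triangle inequality. Since $|f_n-f_m|\le 1$ on that intersection, monotonicity of the integral gives
\[
\int_{B_n\cap B_m}|f_n-f_m|^p\,\rd\mu\le\int_\X\min(|f_n-f_m|,1)^p\,\rd\mu=\|f_n-f_m\|_{\ap}^p\to 0,
\]
so $(f_n)$ is $\ap$-Cauchy. The hard part will be precisely this passage to $g$: a mere pointwise bound on $|f_n-f_m|$ over $B_n\cap B_m$ is worthless when $\mu(\X)=\infty$, so the subsequence has to be engineered well enough to produce a genuine limit $g$ together with the decay $\mu(|f_n-g|>1/2)\to 0$ — which, I should stress, is a self-contained argument and does not rely on completeness of $\Lambda_p(\X)$ (proven only in Section~\ref{section_proof_thm}).
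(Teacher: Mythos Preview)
Your argument is correct. The forward implication matches the paper exactly, and in the converse you arrive at the same endgame: once an anchor function $g$ is in hand, setting $B_n=\{|f_n-g|\le 1/2\}$ gives $\mu(B_n^c)\to 0$ and $B_n\cap B_m\subseteq\{|f_n-f_m|\le 1\}$, so that
\[
\int_{B_n\cap B_m}|f_n-f_m|^p\,\rd\mu\le\int_{\{|f_n-f_m|\le 1\}}|f_n-f_m|^p\,\rd\mu\le\|f_n-f_m\|_{\ap}^p\to 0.
\]

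The only substantive difference is how that anchor is obtained. The paper first notes that $\|f_n-f_m\|_{\ap}\to 0$ forces $(f_n)$ to be Cauchy in measure (a two-line Chebyshev split) and then invokes the classical fact that Cauchy-in-measure sequences converge in measure to some measurable $f$, which then plays the role of your $g$. You instead construct $g$ from scratch via a rapidly Cauchy subsequence and a Borel--Cantelli argument --- which is, of course, precisely the standard proof of that classical fact, inlined. So the paper's version is shorter by quoting an external completeness result for $L_0$ under convergence in measure, while yours is self-contained at the cost of a few extra lines. Your closing remark is slightly misaimed: the paper's proof does not lean on completeness of $\Lambda_p(\X)$ from Section~\ref{section_proof_thm} either, only on the (independent, classical) completeness of $L_0$ in measure.
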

\begin{proof}
First, suppose that $(f_n)$ is $\ap$-Cauchy and let $(B_n)$ be the sequence of measurable sets associated with this property. Then
\[\|f_n - f_m \|_{\ap}^p \leq \int_{B_n \cap B_m} |f_n - f_m|^p \, \mathrm{d}\mu + \mu(B_n^c) + \mu( B_m^c) \to 0  \quad \text{as} \ n,m \to \infty\] and hence $(f_n)$ is Cauchy with respect to $\| \cdot \|_{\ap}$. \par 
Now, assume that $(f_n)$ is Cauchy with respect to $\| \cdot \|_{\ap}$. Then 
\[\int_{\{|f_n - f_m| \leq 1 \}} |f_n-f_m|^p \, \mathrm{d}\mu \to 0  \quad \text{and} \quad \mu(|f_n - f_m|> 1) \to 0 \quad \text{as} \ n,m \to \infty.\]
We claim that $(f_n)$ is Cauchy in measure. Indeed, for $\delta > 0$ it holds that
\begin{align*}
\delta^p  \mu(|f_n - f_m| > \delta)  = \ & \delta^p \mu( \{|f_n-f_m| > \delta \} \cap \{|f_n - f_m| \leq 1  \} ) \\
 & + \delta^p \mu( \{|f_n-f_m| > \delta \} \cap \{|f_n - f_m| > 1  \} ) \\
\leq \ & \int_{\{|f_n - f_m| \leq 1 \}} |f_n-f_m|^p \, \mathrm{d}\mu + \delta^p \mu(|f_n - f_m|> 1) \to 0 \quad \text{as} \ n,m \to \infty
\end{align*}
which proves the claim. Consequently, there exists a measurable function $f$ to which $(f_n)$ converges in measure. For each $n \in \N$, let $B_n = \{|f_n - f| \leq 1/2 \}$ and note that $\mu(B_n^c) \to 0$ as $n \to \infty$. Thus, $B_n \cap B_m \subseteq \{ |f_n - f_m| \leq 1 \}$ and hence 
\[\int_{B_n \cap B_m} |f_n - f_m|^p \, \mathrm{d}\mu \leq \int_{\{|f_n - f_m| \leq 1 \}} |f_n-f_m|^p \, \mathrm{d}\mu \to 0 \quad \text{as} \ n,m \to \infty \]
which finishes the proof.
\end{proof}

\subsection{\texorpdfstring{\boldmath{Measurable functions almost in $L_p$}}{Measurable functions almost in L\unichar{"209A}}} \label{section_L_vs_AL}
~\par
In this section we deduce some basic properties of the measurable functions belonging to $\Lambda_p(\X)$. The first result provides a necessary and sufficient condition for a function in $\Lambda_p(\X)$ to also belong to $L_p(\X)$. The condition is as follows: A measurable function $f$ is said to have \textit{absolutely continuous $p$-integrals} if for each $\varepsilon > 0$ there exists $\delta_\varepsilon > 0$ such that $\int_{E} |f|^p \, \rd \mu < \varepsilon^p$ for every measurable set $E$ with $\mu(E) < \delta_\varepsilon$. 

\begin{proposition} \label{prop_L_vs_aL}
A measurable function $f$ belongs to $L_p(\X)$ if, and only if, it belongs to $\Lambda_p(\X)$ and has
absolutely continuous $p$-integrals.
\end{proposition}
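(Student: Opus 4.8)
The plan is to prove both implications directly from the definitions. For the forward direction, suppose $f \in L_p(\X)$. First I would note that $f$ trivially belongs to $\Lambda_p(\X)$: take $E_\delta = \emptyset$ for every $\delta > 0$, since $f\chi_{\X} = f \in L_p(\X)$ by hypothesis. The substantive part is showing $f$ has absolutely continuous $p$-integrals. Here I would apply the standard fact that the measure $\nu(E) = \int_E |f|^p \, \rd\mu$ is a finite measure on $\Sigma$ that is absolutely continuous with respect to $\mu$; the $\varepsilon$-$\delta$ characterization of absolute continuity for a finite measure defined by an integrable density (which follows from dominated convergence, or from splitting $|f|^p$ into the part where it exceeds a large constant $M$ and the bounded remainder) then yields exactly the required condition.

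For the converse, suppose $f \in \Lambda_p(\X)$ and $f$ has absolutely continuous $p$-integrals. Let $\varepsilon = 1$ (say), and let $\delta_1 > 0$ be the corresponding threshold from absolute continuity, so that $\int_E |f|^p \, \rd\mu < 1$ whenever $\mu(E) < \delta_1$. Since $f \in \Lambda_p(\X)$, there exists $E \in \Sigma$ with $\mu(E) < \delta_1$ and $f\chi_{E^c} \in L_p(\X)$, i.e. $\int_{E^c} |f|^p \, \rd\mu < \infty$. Then I would split
\[
\int_{\X} |f|^p \, \rd\mu = \int_{E^c} |f|^p \, \rd\mu + \int_{E} |f|^p \, \rd\mu < \infty + 1 < \infty,
\]
so $f \in L_p(\X)$.

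The main obstacle — really the only point requiring care — is the forward implication, specifically justifying that an $L_p$ function has absolutely continuous $p$-integrals in the stated $\varepsilon$-$\delta$ form. I expect to handle this by the truncation argument: given $\varepsilon > 0$, choose $M$ large enough that $\int_{\{|f| > M\}} |f|^p \, \rd\mu < \varepsilon^p/2$ (possible by dominated convergence since $|f|^p \chi_{\{|f| > M\}} \to 0$ pointwise and is dominated by $|f|^p \in L_1$), then take $\delta_\varepsilon = \varepsilon^p/(2M^p)$, so that for $\mu(E) < \delta_\varepsilon$ one gets $\int_E |f|^p \, \rd\mu \le \int_{\{|f| > M\}} |f|^p \, \rd\mu + M^p \mu(E) < \varepsilon^p/2 + \varepsilon^p/2 = \varepsilon^p$. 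Everything else is bookkeeping.
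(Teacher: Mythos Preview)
Your proposal is correct and follows essentially the same route as the paper: both directions match the paper's proof, with your converse argument being virtually identical (pick $\delta$ from absolute continuity with $\varepsilon=1$, then use the $\Lambda_p$ structure to find $E$ of small measure and split the integral). The only difference is that the paper simply cites the forward absolute-continuity fact as ``well known'' while you spell out the standard truncation argument, which is a harmless elaboration.
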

\begin{proof}
It is well known that if $f$ belongs to $L_p(\X)$, then $f$ has absolutely continuous $p$-integrals. Moreover, it is clear that $L_p(\X) \subseteq \Lambda_p(\X)$. This establishes one direction.\par 
For the other direction, assume that $f$ belongs to $\Lambda_p(\X)$ and has absolutely continuous $p$-integrals. Let $\delta > 0$ be such that $\int_E |f|^p \, \rd \mu < 1 $ whenever $\mu(E) < \delta$. There exists $E_\delta$ with $\mu(E_\delta) < \delta$ such that $\int_{E_\delta^c} |f|^p \, \rd \mu < \infty.$ Then
\begin{align*}
\int_X |f|^p \, \rd \mu  = \int_{E_\delta} |f|^p \, \rd \mu + \int_{E_\delta^c} |f|^p \, \rd \mu < 1 + \int_{E_\delta^c} |f|^p \, \rd \mu  < \infty
\end{align*}
and so, $f \in L_p(\X)$.
\end{proof}

\begin{example}
The function $f : \R \to \R$ given by
\begin{align*}
f(x) = \begin{dcases}
1/x & \text{if} \ x \neq 0 \\
0 & \text{if} \ x = 0
\end{dcases}
\end{align*} 
belongs to $\Lambda_p(\R) \setminus L_p(\R)$ for every $p > 1$.
\end{example}

The next result provides a condition under which a measurable function with finite $\F$-norm $\| \cdot \|_{\ap}$ belongs to $\Lambda_p(\X)$.

\begin{proposition}
A measurable function $f$ belongs to $\Lambda_p(\X)$ if, and only if, $\|f \|_{\ap} < \infty $ and for each $\delta>0$ there exists a measurable set $E_\delta$ with $\mu(E_\delta) < \delta$ such that 
\begin{equation} \label{aux1}
\int_{E_\delta^c \cap \{ |f| > 1 \}} |f|^p \, \dm < \infty \, . 
\end{equation}
\end{proposition}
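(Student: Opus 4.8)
The plan is to prove both implications directly from the definitions, using only the elementary pointwise inequalities $\min(|f|,1)^p \le |f|^p$ and $\min(|f|,1)^p \le 1$, together with the decomposition of an integral over $E_\delta^c$ into the parts where $|f| \le 1$ and where $|f| > 1$.

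For the forward implication I would start from $f \in \Lambda_p(\X)$ and first check finiteness of the $\F$-norm: fixing the particular value $\delta = 1$, the definition of $\Lambda_p(\X)$ supplies a set $E$ with $\mu(E) < 1$ and $f\chi_{E^c} \in L_p(\X)$, and then splitting
\[
\|f\|_{\ap}^p = \int_{E} \min(|f|,1)^p \, \dm + \int_{E^c} \min(|f|,1)^p \, \dm \le \mu(E) + \|f\chi_{E^c}\|_p^p < \infty .
\]
Next, for an arbitrary $\delta > 0$ I would take the set $E_\delta$ from the definition of $\Lambda_p(\X)$ and simply observe that
\[
\int_{E_\delta^c \cap \{|f| > 1\}} |f|^p \, \dm \le \int_{E_\delta^c} |f|^p \, \dm = \|f\chi_{E_\delta^c}\|_p^p < \infty ,
\]
which is exactly \eqref{aux1}.

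For the reverse implication I would assume $\|f\|_{\ap} < \infty$ and that \eqref{aux1} holds for every $\delta > 0$. Given $\delta > 0$, pick the corresponding $E_\delta$ and write
\[
\int_{E_\delta^c} |f|^p \, \dm = \int_{E_\delta^c \cap \{|f| \le 1\}} |f|^p \, \dm + \int_{E_\delta^c \cap \{|f| > 1\}} |f|^p \, \dm .
\]
On $\{|f| \le 1\}$ one has $|f|^p = \min(|f|,1)^p$, so the first term is bounded by $\int_{\X} \min(|f|,1)^p \, \dm = \|f\|_{\ap}^p < \infty$, while the second term is finite by \eqref{aux1}. Hence $f\chi_{E_\delta^c} \in L_p(\X)$ with $\mu(E_\delta) < \delta$, and since $\delta$ was arbitrary, $f \in \Lambda_p(\X)$.

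I do not expect a genuine obstacle here; the argument is essentially a pair of two-term splittings of integrals. The only point that requires a moment's attention is that the set furnished by the definition of $\Lambda_p(\X)$ for a given $\delta$ need not have finite measure — but this is harmless, since for the finiteness of $\|f\|_{\ap}$ it suffices to use one convenient value such as $\delta = 1$, for which $\mu(E_\delta) < 1 < \infty$.
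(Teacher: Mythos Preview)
Your proof is correct and follows essentially the same route as the paper: both directions are handled by splitting $\int_{E_\delta^c} |f|^p\,\dm$ into the pieces on $\{|f|\le 1\}$ and $\{|f|>1\}$, bounding the former by $\|f\|_{\ap}^p$ and the latter either by the full integral $\int_{E_\delta^c}|f|^p\,\dm$ (forward direction) or by hypothesis \eqref{aux1} (reverse direction). The only difference is cosmetic: you prove $\|f\|_{\ap}<\infty$ inline, whereas the paper has already recorded this fact separately (it is the first step of Proposition~\ref{prop_fnorm} in the appendix). Your closing caveat is unnecessary, though---the set $E_\delta$ supplied by the definition of $\Lambda_p(\X)$ always satisfies $\mu(E_\delta)<\delta<\infty$, so finiteness of $\mu(E_\delta)$ is never in doubt for any $\delta>0$.
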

\begin{proof}
If $f \in \Lambda_p(\X)$, then $\|f \|_{\ap} < \infty$ and 
given $\delta > 0$ there exists a measurable set $E_\delta$ with $\mu(E_\delta) < \delta$ such that 
\[\int_{E_\delta^c \cap \{ |f| > 1 \}} |f|^p \, \dm \leq \int_{E_\delta^c} |f|^p \, \dm < \infty \, . \]
On the other hand, if $\|f \|_{\ap} < \infty$ and $f$ satisfies (\ref{aux1}), then given $\delta > 0$ there is $E_\delta$ with $\mu(E_\delta) < \delta$ such that 
\begin{align*}
\int_{E_\delta^c} |f|^p \, \dm & = \int_{E_\delta^c \cap \{ |f| > 1 \}} |f|^p \, \dm + \int_{E_\delta^c \cap \{ |f| \leq 1 \}} |f|^p \, \dm \\
& \leq \int_{E_\delta^c \cap \{ |f| > 1 \}} |f|^p \, \dm + \| f\|_{\ap}^p\\
& < \infty
\end{align*}
which proves that $f \in \Lambda_p(\X)$.
\end{proof}

In the last result of this section we observe that functions in $\Lambda_p(\X)$ have finite $\F$-norm $\| \cdot \|_{\aq}$ for $q \geq p$.
\begin{proposition}
If $f \in \Lambda_p(\X)$ for some $p \geq 1$, then $\|f \|_{\aq} < \infty$ for every $q \geq p$.
\end{proposition}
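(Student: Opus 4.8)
The plan is to exploit the defining property of $\Lambda_p(\X)$ with the single choice $\delta=1$: this splits $\X$ into a piece of finite measure, where $\min(|f|,1)^q\le 1$ is harmless, and a piece on which $|f|^p$ is integrable, where the integrand $\min(|f|,1)^q$ can be dominated by $|f|^p$. The only pointwise fact needed is that raising a number in $[0,1]$ to a larger exponent decreases it.

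Concretely, first I would apply \eqref{Ap} with $\delta=1$ to obtain a measurable set $E$ with $\mu(E)<1$ and $f\chi_{E^c}\in L_p(\X)$, i.e.\ $\int_{E^c}|f|^p\,\dm<\infty$. Then I would write $\|f\|_{\aq}^q=\int_{\X}\min(|f|,1)^q\,\dm=\int_{E}\min(|f|,1)^q\,\dm+\int_{E^c}\min(|f|,1)^q\,\dm$ and estimate each term: on $E$ the integrand is at most $1$, so the first term is bounded by $\mu(E)<1$; on $E^c$, since $\min(|f|,1)\le 1$ and $q\ge p\ge 1$, one has $\min(|f|,1)^q\le\min(|f|,1)^p\le|f|^p$, so the second term is bounded by $\int_{E^c}|f|^p\,\dm<\infty$. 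Adding the two bounds gives $\|f\|_{\aq}^q<\infty$, hence $\|f\|_{\aq}<\infty$.

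There is no genuine obstacle here; the argument is essentially a one-line estimate, and the only point to be careful about is the direction of the inequality $t^q\le t^p$ for $t\in[0,1]$ and $q\ge p$. (One could alternatively first establish the inclusion $\Lambda_p(\X)\subseteq\Lambda_q(\X)$ for $q\ge p$ — by additionally discarding the set $E^c\cap\{|f|>M\}$, whose measure is at most $M^{-p}\int_{E^c}|f|^p\,\dm\to 0$, and using $|f|^q\le M^{q-p}|f|^p$ on the remaining set — and then invoke the fact, recorded in the introduction, that $\|\cdot\|_{\aq}$ is finite on $\Lambda_q(\X)$; but the direct computation above is shorter.)
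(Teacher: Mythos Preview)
Your argument is correct, but it takes a slightly longer route than the paper. The paper observes directly that $\min(|f|,1)\le 1$ implies $\min(|f|,1)^q\le\min(|f|,1)^p$ pointwise on all of $\X$, and hence $\|f\|_{\aq}^q\le\|f\|_{\ap}^p<\infty$, invoking the finiteness of $\|\cdot\|_{\ap}$ on $\Lambda_p(\X)$ established in Proposition~\ref{prop_fnorm}. Your splitting into $E$ and $E^c$ is unnecessary: the same pointwise inequality you use on $E^c$ already works globally, and your treatment of $E$ essentially re-derives part of the proof of Proposition~\ref{prop_fnorm}. Both approaches hinge on the same monotonicity $t^q\le t^p$ for $t\in[0,1]$; the paper just packages the intermediate step more cleanly.
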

\begin{proof}
Let $1 \leq p \leq q$ and assume that $f \in \Lambda_p(\X)$. Then $\|f \|_{\ap}$ is finite, and since $\min(|f|,1) \leq 1$ we have $\min(|f|,1)^q \leq \min(|f|,1)^p$. Consequently,
\begin{align*}
\|f \|_{\aq}^q  = \int_{\X} \min(|f|,1)^q \, \dm \leq \int_{\X} \min(|f|,1)^p \, \dm = \|f \|_{\ap}^p  < \infty
\end{align*}
as desired.
\end{proof}

\subsection{\texorpdfstring{\boldmath Convergence almost in $L_p$}{Convergence almost in L\unichar{"209A}}} \label{section_almost_conv}

~\par
Here we recall the notion of convergence almost in $L_p$, introduced in \cite{alves2024mode}, which appears as the natural mode of convergence for the space $\Lambda_p(\X)$. A sequence $(f_n)$ of measurable functions is said to \textit{converge almost in $L_p$} to a measurable function $f$ if for each $\delta > 0$, there exists a measurable set $E_\delta$ with $\mu(E_\delta) < \delta$ such that
\[
\int_{E_\delta^c} |f_n - f|^p \, \mathrm{d}\mu \to 0 \quad \text{as } n \to \infty \, .
\]

Convergence in $L_p$ clearly implies convergence almost in $L_p$ (and $\ap$-convergence), which in turn imply (local) convergence in measure. Moreover, if $(f_n)$ converges to $f$ almost in $L_p$, then it also $\ap$-converges to $f$; conversely, if $(f_n)$ $\ap$-converges to $f$, then there exists a subsequence that converges to $f$ almost in $L_p$. For detailed proofs of these implications, we refer to \cite{alves2024mode}. Furthermore, convergence almost in $L_p$ plays an important role in establishing that $\ap$-Cauchy sequences of measurable functions are $\ap$-convergent; see \cite[Theorem 3.3]{alves2024mode}.

\par
Although convergence almost in $L_p$ may appear to be the natural mode of convergence for $\Lambda_p(\X)$, a key point in this work is that we endow $\Lambda_p(\X)$ with the topology of asymptotic $L_p$-convergence, which is slightly weaker but metrizable.

\medskip
\subsection{Topology of (local) convergence in measure} \label{section_conv_measure}
~\par

There are two classical topologies for the space $L_0(\X)$ which we recall here. The first, considered by Fr\'{e}chet \cite{frechet1921sur}, generates the topology of convergence in measure and is induced by a metric $\rd_\mu(f,g) = \|f - g \|_\mu$, where
\[
\| f \|_\mu = \min \left( \inf_{\delta > 0} \left\{ \mu(|f| > \delta) + \delta \right\}, 1 \right).
\]
Note that the truncation above is necessary to ensure that the functional $\| \cdot \|_\mu$ is finite; it is unnecessary on finite measure spaces. For instance, if $\X = [0,\infty)$, $\Sigma$ is the Borel $\sigma$-algebra, and $\mu$ is Lebesgue measure, then for the function $f(x) = x$ we have $\mu(|f| > \delta) + \delta = \infty$ for every $\delta > 0$. 

Moreover, it can be readily seen that $\| \cdot \|_\mu$ satisfies all the conditions of an $\F$-norm except that $\lim_{\lambda \to 0} \| \lambda f \|_\mu = 0$ may fail. For example, considering $f(x) = x$ as above and $\lambda_n = 1/n$, we have $\| \lambda_n f \|_\mu = 1$ for every $n \in \N$. Thus, this topology on $L_0(\X)$ is, in general, generated by a functional that is weaker than an $\F$-norm. In addition, the metric $\rd_\mu$ is somewhat impractical to work with.

\par
Now we turn our attention to the topology of local convergence in measure, which is generated by the following family of $\F$-seminorms (see Definition \ref{def_fnorm} and Corollary \ref{cor_fseminorm} in the appendix):
\[
\left\{ \| \cdot\|_{\alpha, F} \mid F \in \Sigma, \ \mu(F) < \infty \right\}
\]
where 
\[
\| f\|_{\alpha, F} \coloneq \| f \chi_F \|_{\alpha} = \int_F \min(|f|,1) \, \mathrm{d}\mu \, .
\]
Following Fremlin \cite{fremlin2003measure}, we may regard this as the standard topology on $L_0(\X)$. For $p > 1$, one could instead consider the seminorms $\| \cdot \|_{\ap,F}$, which generate the same topology since, for any $F \in \Sigma$,
\[
\| f\|_{\ap,F}^p \leq \|f \|_{\alpha,F} \leq \| f\|_{\ap,F} \, \mu(F)^{1 - \frac{1}{p}} \, .
\]
\par 
Furthermore, if $f$ is a measurable function, $F$ is a measurable set of finite measure, $p \geq 1$, and $\delta_0 > 0$, then
\begin{equation} \label{measure_implies_local}
\|f \|_{\ap,F}^p \leq \max(1,\mu(F)) \inf_{\delta>0} \left\{ \mu(|f| > \delta) + \delta \right\}
\end{equation}
and
\begin{equation} \label{alpha_implie_measure}
\inf_{\delta>0} \left\{ \mu(|f| > \delta) + \delta \right\} \leq \max(1, \delta_0^{-p}) \, \|f \|_{\ap}^p + \delta_0 \, .
\end{equation}
These estimates make precise the known implications: asymptotic $L_p$-convergence implies convergence in measure, which in turn implies local convergence in measure. Moreover, on finite measure spaces, these three modes of convergence are equivalent. On general measure spaces, a characterization of asymptotic $L_p$-convergence on $\Lambda_p(\X)$ in terms of (local) convergence in measure is given in Theorem~\ref{thm_vitali_lambda}.

\par
The topological properties of the space $L_0(\X)$ with the topology of local convergence in measure are closely linked to certain measure-theoretic properties of $(\X, \Sigma, \mu)$. We now recall some relevant notions; see \cite{fremlin2003measure}. A \textit{negligible set} is a set that is contained in a measurable set of finite measure. Moreover, a measure space $(\X, \Sigma, \mu)$ is:
\begin{enumerate}[(i)]
\item \textit{semi-finite} if for every measurable set $E \subseteq \X$ with $\mu(E) = \infty$, there exists a measurable set $F \subseteq E$ such that $0 < \mu(F) < \infty$;
\item \textit{localizable} if it is semi-finite and, for every family $\Sigma_0 \subseteq \Sigma$, there exists a set $H \in \Sigma$ such that:
\begin{itemize}
\item for every $E \in \Sigma_0$, the set $E \setminus H$ is negligible, and
\item if $G \in \Sigma$ satisfies that $E \setminus G$ is negligible for every $E \in \Sigma_0$, then $H \setminus G$ is negligible;
\end{itemize}
\item \textit{$\sigma$-finite} if there exists a sequence $(E_n)$ of measurable sets of finite measure such that $\X = \bigcup_{n \in \N} E_n$.
\end{enumerate}
Every $\sigma$-finite measure space is localizable, and therefore semi-finite; see \cite[211L]{fremlin2003measure}. The next result relates these measure-theoretic properties to the topology of local convergence in measure on $L_0(\X)$:

\begin{theorem}[{\cite[245E]{fremlin2003measure}}]
Let $(\X, \Sigma, \mu)$ be a measure space. Then $L_0(\X)$ equipped with the topology of local convergence in measure is:
\begin{enumerate}[(i)]
\item Hausdorff if, and only if, $(\X,\Sigma,\mu)$ is semi-finite,
\item Hausdorff and complete if, and only if, $(\X,\Sigma,\mu)$ is localizable,
\item metrizable if, and only if, $(\X,\Sigma,\mu)$ is $\sigma$-finite.  
\end{enumerate}
\end{theorem}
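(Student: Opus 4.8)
The plan is to use throughout the description of $\tau_{lm}$, the topology of local convergence in measure on $L_0(\X)$, as the topology of the family of $\F$-seminorms $p_F(f)=\|f\chi_F\|_{\alpha}=\int_F\min(|f|,1)\,\dm$ indexed by the measurable sets $F$ of finite measure, as recalled above. Since $(L_0(\X),\tau_{lm})$ is a topological vector space, it is Hausdorff if and only if the $p_F$ separate points, and, by Birkhoff--Kakutani, metrizable if and only if it is Hausdorff and first countable.

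For (i): if $\X$ is semi-finite and $f\neq 0$, then $\mu(\{|f|>0\})>0$, so there is $F\subseteq\{|f|>0\}$ with $0<\mu(F)<\infty$ and $p_F(f)>0$; hence $\tau_{lm}$ is Hausdorff. Conversely, if $E$ witnesses the failure of semi-finiteness ($\mu(E)=\infty$, every measurable subset of measure $0$ or $\infty$), then $\chi_E\neq 0$ while $p_F(\chi_E)=\mu(F\cap E)=0$ for every finite-measure $F$, so $\tau_{lm}$ is not Hausdorff. For (iii): if $\X=\bigcup_n E_n$ with $E_n$ increasing of finite measure, then $\mu(F\setminus E_n)\to 0$ and $p_F(f)\le p_{E_n}(f)+\mu(F\setminus E_n)$ for every finite-measure $F$, so the countable subfamily $\{p_{E_n}\}$ already generates $\tau_{lm}$; combined with Hausdorffness (from (i), since $\sigma$-finite spaces are semi-finite) this gives first countability, hence metrizability. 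Conversely, if $\tau_{lm}$ is metrizable it is Hausdorff, hence semi-finite by (i), and first countable; a countable neighbourhood base at $0$ involves only countably many of the sets $F$, say the family $\mathcal F$, and then $\{p_F:F\in\mathcal F\}$ generates $\tau_{lm}$. If $G:=\bigcup_{F\in\mathcal F}F$ had a complement of positive measure, semi-finiteness would supply $H\subseteq\X\setminus G$ with $0<\mu(H)<\infty$, and $\chi_H\neq 0$ would lie in every basic neighbourhood of $0$ (since $p_F(\chi_H)=\mu(F\cap H)=0$ for $F\in\mathcal F$), contradicting Hausdorffness; thus $\mu(\X\setminus G)=0$ and $\X=(\X\setminus G)\cup\bigcup_{F\in\mathcal F}F$ is $\sigma$-finite.

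For (ii), suppose first that $\X$ is localizable, hence semi-finite, so $\tau_{lm}$ is Hausdorff by (i). Given a $\tau_{lm}$-Cauchy net $(f_i)$, for each finite-measure $F$ the net $(f_i\chi_F)$ is Cauchy for convergence in measure on $(F,\mu|_F)$, which is a complete metric space (so its Cauchy nets converge); let $g_F$ be the limit. By uniqueness of limits the family $(g_F)_F$ is consistent under restriction, and the key step is to glue the $g_F$ into a single $g\in L_0(\X)$. For each rational $r$ let $H_r\in\Sigma$ be the essential supremum, furnished by localizability, of the family $\{\{x\in F:g_F(x)>r\}:\mu(F)<\infty\}$. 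Using the minimality clause in the essential-supremum property together with consistency of the $g_F$, one checks that $H_r\cap F=\{g_F>r\}$ up to a null set for every finite-measure $F$; semi-finiteness then upgrades "null on every finite-measure $F$" to "null", which forces $r\mapsto H_r$ to be (after correction on null sets) non-increasing with $\mu(\X\setminus\bigcup_r H_r)=0=\mu(\bigcap_r H_r)$. Setting $g:=\sup\{r\in\mathbb Q:x\in H_r\}$ yields a measurable, a.e.\ real-valued function with $g\chi_F=g_F$ a.e.\ for every finite-measure $F$; consequently $p_F(f_i-g)=\|f_i\chi_F-g_F\|_{\alpha}\to 0$ for all such $F$, so $f_i\to g$ and $\tau_{lm}$ is complete.

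For the converse in (ii), assume $\tau_{lm}$ is Hausdorff and complete, so $\X$ is semi-finite by (i); it remains to produce essential suprema. Given $\Sigma_0\subseteq\Sigma$, let $\mathcal E$ be the family of finite-measure sets contained, up to a null set, in a finite union of members of $\Sigma_0$; it is directed upward by inclusion, and $(\chi_E)_{E\in\mathcal E}$ is $\tau_{lm}$-Cauchy, because for each finite-measure $F$ the numbers $\mu(F\cap E)$ increase to a finite limit, forcing $\mu(F\cap(E\triangle E'))\to 0$. By completeness $\chi_E\to h$ in $\tau_{lm}$, and since on each finite-measure $F$ the functions $\chi_E\chi_F$ are $\{0,1\}$-valued and this property is preserved under convergence in measure, $h=\chi_H$ a.e.\ for some $H\in\Sigma$. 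One then verifies that $H$ is the essential supremum of $\Sigma_0$: for $E_0\in\Sigma_0$ and finite-measure $F\subseteq E_0$ the cofinal subnet $\{E\in\mathcal E:E\supseteq F\}$ gives $p_F(\chi_E)\equiv\mu(F)$, hence $p_F(\chi_H)=\mu(F)$, i.e.\ $\mu(F\setminus H)=0$, and semi-finiteness yields $\mu(E_0\setminus H)=0$; and if $E\setminus G$ is null for every $E\in\Sigma_0$, then $p_{F\setminus G}(\chi_E)=0$ for all $E\in\mathcal E$ forces $p_{F\setminus G}(\chi_H)=0$, so $\mu((F\setminus G)\cap H)=0$ for every finite-measure $F$ and, again by semi-finiteness, $\mu(H\setminus G)=0$. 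Thus $\X$ is localizable. The main obstacle is the gluing step in the localizable-implies-complete direction: assembling the locally defined limits $g_F$ into a globally measurable function is precisely what fails for semi-finite but non-localizable spaces, and making the values $\pm\infty$, the dependence on $r$, and the measurability of the level sets fit together requires the full essential-supremum property.
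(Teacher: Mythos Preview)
The paper does not prove this theorem; it is quoted from Fremlin's treatise as background, with no argument supplied. There is therefore no in-paper proof to compare against. Your sketch is correct and follows the natural route (essentially Fremlin's): Hausdorffness is exactly the separating property of the seminorms $p_F$, which is equivalent to semi-finiteness; metrizability reduces to extracting a countable generating subfamily of the $p_F$; and for (ii) one glues local limits via essential suprema in one direction, and constructs the essential supremum of a given family as the limit of a Cauchy net of indicator functions in the other.

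A few places would benefit from one extra sentence if this is to stand as a full proof rather than a sketch. In (iii), the claim that a countable neighbourhood base ``involves only countably many of the sets $F$'' should be unpacked: each $U_n$ in the base contains some $\bigcap_{i\le k_n}\{p_{F_{n,i}}<\varepsilon_{n,i}\}$, and it is $\mathcal F=\{F_{n,i}:n,i\}$ that you take. In (ii), the fact that Cauchy \emph{nets} converge in a complete metric space is true but merits a line (extract a Cauchy subsequence along the net, then use the Cauchy condition to show the whole net converges to the same limit). The step ``null on every finite-measure $F$ implies null'', used repeatedly in the gluing and in the verification that $H$ is an essential supremum, is exactly where semi-finiteness enters and is worth isolating once. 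None of these are genuine gaps.
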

\par 
In light of Theorem~\ref{mainthm1} and the discussion above we conclude that $\Lambda_p(\X)$ is a viable alternative to $L_0(\X)$ on general measure spaces that retains key topological features. In particular, $\Lambda_p(\X)$ is metrizable (hence Hausdorff) and complete, without requiring any additional assumptions on the underlying measure space.

\section{Proof of Theorem \ref{mainthm1}} \label{section_proof_thm}

In this section we provide a proof of Theorem \ref{mainthm1} divided into three lemmas. We first evaluate the continuity of the operations of addition, $(f, g) \mapsto f + g$, and scalar multiplication, $(\lambda, f) \mapsto \lambda f$. The continuity of the addition operation follows immediately from the triangle inequality. The continuity of scalar multiplication is established in the next lemma. Consequently, $\Lambda_p(\X)$, endowed with the $\F$-norm $\|\cdot\|_{\ap}$, is a metrizable topological vector space.
\begin{lemma}
Let $(f_n),f$ belong to $\Lambda_p(\mathrm{X})$ and $(\lambda_n),\lambda$ belong to $\R$. If $(f_n)$ $\ap$-converges to $f$ and $(\lambda_n)$ converges to $\lambda$, then $(\lambda_n f_n)$ $\ap$-converges to $\lambda f $.
\end{lemma}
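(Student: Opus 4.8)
The plan is to pass, via Proposition~\ref{prop_alpha_metric}, from $\ap$-convergence to convergence in the $\F$-norm $\|\cdot\|_{\ap}$, and then to estimate $\|\lambda_n f_n - \lambda f\|_{\ap}$ by splitting
\[
\lambda_n f_n - \lambda f = \lambda_n (f_n - f) + (\lambda_n - \lambda) f
\]
and bounding the two resulting pieces separately. By the triangle inequality for the $\F$-norm (Proposition~\ref{prop_fnorm}) it suffices to show that $\|\lambda_n(f_n-f)\|_{\ap}\to 0$ and $\|(\lambda_n-\lambda)f\|_{\ap}\to 0$ as $n\to\infty$.

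The elementary inequality $\min(at,1)\le\max(1,a)\,\min(t,1)$, valid for all $a,t\ge 0$ (immediate from treating the cases $a\le 1$ and $a\ge 1$ separately), together with monotonicity of $x\mapsto x^p$, handles both pieces. For the first, since $(\lambda_n)$ converges it is bounded, so $M:=\sup_n\max(1,|\lambda_n|)<\infty$ and pointwise $\min(|\lambda_n|\,|f_n-f|,1)\le M\,\min(|f_n-f|,1)$; integrating the $p$-th powers yields $\|\lambda_n(f_n-f)\|_{\ap}\le M\,\|f_n-f\|_{\ap}$, which tends to $0$ by hypothesis (again via Proposition~\ref{prop_alpha_metric}).

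For the second piece I would invoke the dominated convergence theorem for the integrands $g_n:=\min(|\lambda_n-\lambda|\,|f|,1)^p$. Since $f$ is real-valued $\mu$-a.e.\ and $\lambda_n-\lambda\to 0$, we have $g_n\to 0$ $\mu$-a.e.; and with $c:=\sup_n|\lambda_n-\lambda|<\infty$ the inequality above gives $g_n\le\max(1,c)^p\,\min(|f|,1)^p$, which is $\mu$-integrable precisely because $f\in\Lambda_p(\X)$ forces $\|f\|_{\ap}<\infty$. Hence $\|(\lambda_n-\lambda)f\|_{\ap}^p=\int_{\X}g_n\,\dm\to 0$, and combining the two estimates completes the argument. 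No real obstacle arises; the only point to watch is the asymmetry of $\min(at,1)\le a\,\min(t,1)$, which holds only for $a\ge 1$ and is circumvented by the uniform constants $M$ and $\max(1,c)$, plus the routine observation that measurability (hence a.e.\ finiteness) of $f$ makes $g_n\to 0$ hold almost everywhere.
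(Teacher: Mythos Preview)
Your proof is correct. Both you and the paper split $\lambda_n f_n - \lambda f = \lambda_n(f_n-f) + (\lambda_n-\lambda)f$ and bound the two pieces separately, but the tools differ. For the first piece, the paper works directly with the sets $B_n$ from the definition of $\ap$-convergence, estimating $\int_{\X} \min(|\lambda_n||f_n-f|,1)^p\,\rd\mu \leq |\lambda_n|^p\int_{B_n}|f_n-f|^p\,\rd\mu + \mu(B_n^c)$; your pointwise inequality $\min(at,1)\leq\max(1,a)\min(t,1)$ yields the cleaner bound $\|\lambda_n(f_n-f)\|_{\ap}\leq M\,\|f_n-f\|_{\ap}$ directly at the level of the $\F$-norm. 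For the second piece, the paper invokes the $\Lambda_p$ structure explicitly (a set $E_\varepsilon$ of small measure with $f\chi_{E_\varepsilon^c}\in L_p(\X)$), essentially reproving condition~(iii) of Proposition~\ref{prop_fnorm}; your argument via the classical dominated convergence theorem is more streamlined and only uses $\|f\|_{\ap}<\infty$, not the full almost-$L_p$ property. Your route is a bit more elegant; the paper's is more self-contained in that it avoids appealing to DCT.
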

\begin{proof}
Let $\varepsilon > 0$. Since $f \in \Lambda_p(\mathrm{X})$, there exists a measurable set $E_\varepsilon$ with $\mu(E_\varepsilon) < \varepsilon/2^{p+1}$ such that $\int_{E_\varepsilon^c} |f|^p \, \dm = C_\varepsilon < \infty $. Moreover, from the $\ap$-convergence of $(f_n)$ towards $f$, there exist a sequence of measurable sets $(B_n)$ and $M_\varepsilon \in \N$ so that for all $n \geq M_\varepsilon$, \[\int_{B_n} |f_n-f|^p \, \dm < \frac {\varepsilon}{2^{p+1}(1+|\lambda|)^p} \quad \text{and} \quad \mu(B_n^c) < \frac{\varepsilon}{2^{p+1}} \, .\]
Additionally, let $N,K_\varepsilon \in \N$ be such that \[|\lambda_n-\lambda| < \frac{\varepsilon}{ 2^{p+1}C_\varepsilon} \quad \forall n \geq K_\varepsilon \]
and \[|\lambda_n-\lambda| < 1 \quad \forall n \geq N. \]
Set $N_\varepsilon = \max\{N,M_\varepsilon,K_\varepsilon\}$ and note that for $n \geq N_\varepsilon$, $|\lambda_n| < 1 + |\lambda|$. \par Thus, for $n \geq N_\varepsilon$ we estimate
\begin{align*}
\|\lambda_n f_n -  \lambda f \|_{\ap}^p  \leq \ & 2^{p-1} \left(\|\lambda_n f_n- \lambda_n f\|_{\ap}^p + \|\lambda_n f - \lambda f\|_{\ap}^p   \right) \\
 = \ &  2^{p-1} \left(\int_X \min(|\lambda_n||f_n-f|,1)^p \, \dm \right) \\
 &  + 2^{p-1} \left( \int_X \min(|\lambda_n-\lambda||f|,1)^p \, \dm  \right) \\
 \leq \ & 2^{p-1} \left( |\lambda_n|^p\int_{B_n}  |f_n-f|^p \, \dm +  \mu(B_n^c) \right) \\
 & + 2^{p-1} \left( |\lambda_n-\lambda|^p\int_{E_\varepsilon^c}  |f|^p \, \dm  +  \mu(E_\varepsilon) \right) \\
< \ & \varepsilon
\end{align*}
which concludes the proof.
\end{proof}
\par 
The next result establishes the completeness of $\Lambda_p(\X)$, meaning that every Cauchy sequence in $\Lambda_p(\X)$ converges to some element of $\Lambda_p(\X)$. As a consequence, we have that $\Lambda_p(\X)$ is an $\F$-space.

\begin{lemma}
Let $(f_n)$ be a sequence of functions in $\Lambda_p(\X)$. If $(f_n)$ is $\ap$-Cauchy, then there exists $f \in \Lambda_p(\mathrm{X})$ such that $ \| f_n - f\|_{\ap} \to 0 $ as $n \to \infty$.
\end{lemma}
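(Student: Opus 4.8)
The plan is to first locate the limit $f$ as a measurable function, then establish convergence to it in the $\F$-norm $\|\cdot\|_{\ap}$, and finally verify that $f$ actually lies in $\Lambda_p(\X)$ — this last step being where the almost-$L_p$ structure genuinely enters.

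\emph{Step 1 (a measurable limit, and convergence in $\F$-norm).} Since $(f_n)$ is $\ap$-Cauchy, the characterization established earlier gives $\|f_n - f_m\|_{\ap} \to 0$ as $n,m \to \infty$, and the argument in its proof shows that $(f_n)$ is Cauchy in measure; hence there is a measurable function $f$ with $f_n \to f$ in measure, and we may pass to a subsequence $(f_{n_j})$ converging to $f$ $\mu$-a.e. To see $\|f_n - f\|_{\ap} \to 0$, fix $\varepsilon > 0$ and choose $N$ with $\|f_n - f_m\|_{\ap}^p < \varepsilon$ for all $n,m \geq N$; for fixed $n \geq N$ the functions $\min(|f_n - f_{n_j}|,1)^p$ converge $\mu$-a.e.\ to $\min(|f_n - f|,1)^p$, so Fatou's lemma yields
\[
\|f_n - f\|_{\ap}^p = \int_{\X} \min(|f_n - f|,1)^p \, \dm \leq \liminf_{j} \int_{\X} \min(|f_n - f_{n_j}|,1)^p \, \dm \leq \varepsilon,
\]
since $n_j \geq N$ for $j$ large. (Fatou is the right tool here, rather than bounded convergence, because the dominating constant $1$ need not be integrable when $\mu(\X) = \infty$.) Thus $\|f_n - f\|_{\ap} \to 0$, and by Proposition~\ref{prop_alpha_metric} the sequence $(f_n)$ $\ap$-converges to $f$.

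\emph{Step 2 (membership in $\Lambda_p(\X)$).} It remains to check the defining property of $\Lambda_p(\X)$ for $f$. Fix $\delta > 0$. Since $(f_n)$ $\ap$-converges to $f$, there is a subsequence $(f_{m_k})$ converging to $f$ almost in $L_p$ (as recalled in Section~\ref{section_almost_conv}); choose a measurable set $A$ with $\mu(A) < \delta/2$ along which $\int_{A^c} |f_{m_k} - f|^p \, \dm \to 0$, and fix $k$ large enough that this integral is finite, writing $g = f_{m_k} \in \Lambda_p(\X)$. Because $g \in \Lambda_p(\X)$, there is a measurable set $B$ with $\mu(B) < \delta/2$ and $\int_{B^c} |g|^p \, \dm < \infty$. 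Setting $E_\delta = A \cup B$, we have $\mu(E_\delta) < \delta$ and, using $|a + b|^p \leq 2^{p-1}(|a|^p + |b|^p)$,
\[
\int_{E_\delta^c} |f|^p \, \dm \leq 2^{p-1}\left( \int_{A^c} |f - g|^p \, \dm + \int_{B^c} |g|^p \, \dm \right) < \infty,
\]
so $f\chi_{E_\delta^c} \in L_p(\X)$. As $\delta > 0$ was arbitrary, $f \in \Lambda_p(\X)$, and together with Step 1 this finishes the proof.

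I expect Step 2 to be the main obstacle. The $\F$-norm $\|\cdot\|_{\ap}$ only controls the truncation $\min(|f|,1)$, so $\|f_n - f\|_{\ap} \to 0$ carries no information about $f$ on the region where it is large; membership in $\Lambda_p(\X)$ must be reconstructed from the $f_n$ themselves together with almost-$L_p$ convergence along a subsequence. The device that makes this work is the splitting $E_\delta = A \cup B$, merging the small-measure set controlling the convergence with the small-measure set off which $f_{m_k}$ is $p$-integrable. (An alternative to Step 1 is to cite \cite[Theorem~3.3]{alves2024mode} for $\ap$-convergence of $\ap$-Cauchy sequences together with Proposition~\ref{prop_alpha_metric}, after which Step 2 is unchanged.)
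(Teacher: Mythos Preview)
Your proof is correct and matches the paper's approach: Step~2 is essentially identical to the paper's argument (the same splitting $E_\delta = A \cup B$ combining the almost-$L_p$ set with the set coming from $g \in \Lambda_p(\X)$), and for Step~1 the paper simply cites \cite[Theorem~3.3]{alves2024mode} together with \cite[Proposition~2.11]{alves2024mode}, exactly the alternative you mention at the end. Your direct Fatou argument in Step~1 is a pleasant self-contained variant but not a substantive departure.
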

\begin{proof}
Since $(f_n)$ is $\ap$-Cauchy, there exists a measurable function $f$ to which $(f_n)$ $\ap$-converges \cite[Theorem 3.3]{alves2024mode}. Then, there exists a subsequence $(g_n) \subseteq (f_n)$ converging to $f$ almost in $L_p$ \cite[Proposition 2.11]{alves2024mode}. Let $\delta > 0$. There exists a measurable set $E_\delta$ with $\mu(E_\delta) < \delta/2$ such that 
\[\int_{E_\delta^c} |g_n - f|^p \, \mathrm{d} \mu \to 0 \quad \text{as} \ n \to \infty \, .\] Pick $N \in \N$ so that 
\[\int_{E_\delta^c} |g_N-f|^p \, \mathrm{d}\mu < \frac{1}{2^{p-1}} \, . \] 
Since the function $g_N$ belongs to $\Lambda_p(\X),$ there exists a measurable set $F_\delta$ with $\mu(F_\delta) < \delta/2$ such that 
\[\int_{F_\delta^c} |g_N|^p \, \mathrm{d}\mu < \infty \, . \]
Set $G_\delta = E_\delta \cup F_\delta$. Then $\mu(G_\delta) < \delta$ and 
\begin{align*}
\int_{G_\delta^c} |f|^p \, \mathrm{d}\mu & \leq 2^{p-1} \int_{E_\delta^c} |g_N - f|^p \, \mathrm{d}\mu + 2^{p-1} \int_{F_\delta^c} |g_N|^p \, \mathrm{d}\mu \\
& < 1 + 2^{p-1}\int_{F_\delta^c} |g_N|^p \, \mathrm{d}\mu \\
& < \infty
\end{align*}
from which it follows that $f \in \Lambda_p(\X)$, as desired.
\end{proof}

To conclude the proof of Theorem \ref{mainthm1}, we show that on finite measure spaces, measurable functions are almost in $L_p$ for any $p \geq 1$. 

\begin{lemma} \label{lem_L0=Lambda_finite}
If $\mu(\mathrm{X}) < \infty$, then $L_0(\X) = \Lambda_p(\mathrm{X})$ for all $p \geq 1$.
\end{lemma}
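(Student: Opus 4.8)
The inclusion $\Lambda_p(\X) \subseteq L_0(\X)$ holds by definition on any measure space, since every element of $\Lambda_p(\X)$ is a measurable function. Thus the content of the lemma is the reverse inclusion $L_0(\X) \subseteq \Lambda_p(\X)$ when $\mu(\X) < \infty$, and the plan is to verify the defining condition \eqref{Ap} directly for an arbitrary measurable $f$ by truncating at large values.

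First I would fix $f \in L_0(\X)$ and, for $n \in \N$, set $E_n = \{|f| > n\}$. These sets decrease with $n$, and since $f$ is real-valued ($\mu$-a.e.), $\bigcap_{n} E_n = \{|f| = \infty\}$ is $\mu$-negligible. Because $\mu(E_1) \leq \mu(\X) < \infty$, continuity of $\mu$ from above applies and yields $\mu(E_n) \to 0$ as $n \to \infty$. This is the only place where finiteness of the measure is genuinely used (the other use, below, being the trivial bound $n^p \mu(\X) < \infty$).

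Then, given $\delta > 0$, I would pick $n \in \N$ with $\mu(E_n) < \delta$ and take $E_\delta = E_n$, so that $\mu(E_\delta) < \delta$. On $E_\delta^c = \{|f| \leq n\}$ we have the pointwise bound $|f| \leq n$, hence
\[
\int_{\X} |f \chi_{E_\delta^c}|^p \, \rd \mu = \int_{E_\delta^c} |f|^p \, \rd \mu \leq n^p \, \mu(\X) < \infty,
\]
so $f \chi_{E_\delta^c} \in L_p(\X)$. Since $\delta > 0$ was arbitrary, $f \in \Lambda_p(\X)$, which gives $L_0(\X) \subseteq \Lambda_p(\X)$ and completes the proof (and, together with the two preceding lemmas, the proof of Theorem~\ref{mainthm1}).

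There is no real obstacle here: the argument is a routine truncation combined with continuity of a finite measure from above. The only point worth stating carefully is that $\{|f| = \infty\}$ is negligible so that $\mu(E_n) \downarrow 0$; on an infinite measure space this step fails precisely because $\mu(\X) = \infty$ would also break the final integrability estimate, which is consistent with the fact that $\Lambda_p(\X) \subsetneq L_0(\X)$ in general.
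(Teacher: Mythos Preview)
Your proof is correct and follows essentially the same truncation strategy as the paper: define the level sets where $|f|$ is large, show their measures tend to zero, and bound the integral on the complement. Your use of continuity from above (via $\mu(E_1)\le\mu(\X)<\infty$ and $\bigcap_n E_n$ negligible) is in fact cleaner than the paper's version, which proves $\mu(E_n)\to 0$ by an ad hoc contradiction/subsequence argument.
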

\begin{proof}
We show that $L_0(\mathrm{X}) \subseteq \Lambda_p(\mathrm{X})$. Let $f \in L_0(\mathrm{X})$ and note that, by definition, $f$ is finite $\mu$-almost everywhere. For each $n \in \N$, let $E_n$ be the set $E_n = \{x \in X \ | \ |f(x)|^p > n \}$. It is clear that $E_{m} \subseteq E_n$ for all $n,m \in \N$ such that $m \geq n$. We claim that $\mu(E_n) \to 0$ as $n \to \infty$. Suppose not, then there exists $\varepsilon > 0$ such that for every $n \in \N$ there exists $\tilde{k}_n \geq n$ so that $\mu(E_{\tilde{k}_n})\geq \varepsilon$. Set $k_1 = \tilde{k}_1$. If $\tilde{k}_2 \geq k_1$, set $k_2 = \tilde{k}_2$; otherwise $k_2 = k_1$. Similarly, if $\tilde{k}_3 \geq k_2$, set $k_3 = \tilde{k}_3$; otherwise $k_3 = k_2$. Continuing this process, we find a sequence $(k_n) \to \infty$ as $n \to \infty$, such that $k_{n+1} \geq k_{n}$ and so $E_{k_{n+1}} \subseteq E_{k_{n}}$. Then \[\mu \left(\bigcap_{n\in \N} E_{k_n} \right) =  \lim_{n\to \infty} \mu(E_{k_n}) \geq \varepsilon \]
which implies that $f$ is infinite on a set of positive measure, which is impossible, thus establishing the claim. Now, let $\delta > 0$ and choose $N \in \N$ so that $\mu(E_N) < \delta$. We have
\[\int_{E_N^c} |f|^p \, \rd \mu \leq N \mu(E_N^c) < \infty \]
which establishes the result.
\end{proof}

\section{Dominated convergence results} \label{section_dominated}
The first result of this section is a Lebesgue dominated convergence theorem for the space $\Lambda_p(\X)$, being a straightforward adaptation of \cite[Theorem 5.6]{bartle1995elements}.
\begin{proposition} \label{DCT}
Let $(f_n) \subseteq \Lambda_1(\X)$ and $f$ be measurable. Assume that $(f_n)$ converges to $f$ almost everywhere and that there exists $g \in \Lambda_1(\X)$ such that $\sup_n |f_n| \leq g$ almost everywhere. Then $f \in \Lambda_1(\X)$ and for every $\delta > 0$ there exists a measurable set $E_\delta$ with $\mu(E_\delta) < \delta$ such that 
\[\int_{E_\delta^c} f_n \, \rd\mu \to \int_{E_\delta^c} f \, \rd \mu \qquad \text{as} \ n \to \infty \, .\]
\end{proposition}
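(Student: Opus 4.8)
The plan is to reduce the statement, for each fixed $\delta > 0$, to the classical Lebesgue dominated convergence theorem applied on the restricted measure space $(E_\delta^c, \Sigma \cap E_\delta^c, \mu)$, where $E_\delta$ is supplied by the hypothesis $g \in \Lambda_1(\X)$. First I would fix $\delta > 0$ and invoke the definition \eqref{Ap} for $g$: there exists a measurable set $E_\delta$ with $\mu(E_\delta) < \delta$ such that $g\chi_{E_\delta^c} \in L_1(\X)$, i.e. $\int_{E_\delta^c} g \, \rd\mu < \infty$. Since $f_n \to f$ a.e. and $|f_n| \leq g$ a.e. for every $n$, passing to the limit gives $|f| \leq g$ a.e.; in particular $\int_{E_\delta^c} |f| \, \rd\mu \leq \int_{E_\delta^c} g \, \rd\mu < \infty$, so $f\chi_{E_\delta^c} \in L_1(\X)$. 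As $\delta > 0$ was arbitrary, this already shows $f \in \Lambda_1(\X)$.

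Next, working on $E_\delta^c$, I would note that the restrictions $f_n\chi_{E_\delta^c}$ converge a.e. to $f\chi_{E_\delta^c}$, are dominated by $g\chi_{E_\delta^c} \in L_1(\X)$, and each lies in $L_1(E_\delta^c)$ (again by the domination). The classical dominated convergence theorem (in the form of \cite[Theorem 5.6]{bartle1995elements}) then yields
\[
\int_{E_\delta^c} |f_n - f| \, \rd\mu \to 0 \qquad \text{as } n \to \infty,
\]
and a fortiori $\int_{E_\delta^c} f_n \, \rd\mu \to \int_{E_\delta^c} f \, \rd\mu$, which is exactly the asserted conclusion for this $\delta$.

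There is essentially no serious obstacle here: the whole point of the almost-$L_p$ framework is that a single set $E_\delta$ — chosen from the definition of $\Lambda_1(\X)$ applied to the dominating function $g$ — simultaneously certifies that $f \in \Lambda_1(\X)$ and localizes the problem to a genuinely integrable setting where the textbook theorem applies verbatim. The only point that deserves a line of care is the a.e. inequality $|f| \leq g$, obtained from $|f_n| \leq g$ a.e. and pointwise convergence; everything else is a direct citation of the classical result on the sub-measure-space $E_\delta^c$.
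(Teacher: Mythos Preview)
Your proposal is correct and follows essentially the same approach as the paper: fix $\delta>0$, choose $E_\delta$ from the definition of $\Lambda_1(\X)$ applied to the dominating function $g$, deduce $|f|\le g$ a.e.\ (hence $f\in\Lambda_1(\X)$), and then apply the classical dominated convergence theorem on $E_\delta^c$. The only cosmetic difference is that the paper spells out the standard Fatou-lemma argument for $g\pm f_n$ on $E_\delta^c$ rather than invoking the classical DCT as a black box, while you cite it directly and in fact obtain the slightly stronger conclusion $\int_{E_\delta^c}|f_n-f|\,\rd\mu\to 0$.
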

\begin{proof}
Let $\delta > 0$ and $E_\delta$ be a measurable set with $\mu(E_\delta) < \delta$ such that $ g \chi_{E_\delta^c} \in L_p(\X)$. Since $|f_n| \leq g$ and $(f_n)$ converges to $f$ almost everywhere, then $|f| \leq g$ and $g+f_n$, $g - f_n$ are nonnegative almost everywhere. It follows that $f \in \Lambda_1(\X)$ and by Fatou's lemma we have:
\begin{align*}
\int_{E_\delta^c} g \, \rd \mu + \int_{E_\delta^c} f \, \rd \mu \leq \int_{E_\delta^c} g \, \rd \mu + \liminf\limits_{n\to \infty}   \int_{E_\delta^c} f_n \, \rd \mu
\end{align*}
and, similarly,
\begin{align*}
\int_{E_\delta^c} g \, \rd \mu - \int_{E_\delta^c} f \, \rd \mu & \leq  \int_{E_\delta^c} g \, \rd \mu - \limsup\limits_{n\to \infty}   \int_{E_\delta^c} f_n \, \rd \mu 
\end{align*}
whence
\[\limsup\limits_{n\to \infty}   \int_{E_\delta^c} f_n \, \rd \mu  \leq  \int_{E_\delta^c} f \, \rd \mu \leq \liminf\limits_{n\to \infty} \int_{E_\delta^c} f_n \, \rd \mu \] 
establishing the result.
\end{proof}

The next result is a consequence of the previous proposition, and the first part of its proof is very similar to that of \cite[Theorem 7.2]{bartle1995elements}.
\begin{proposition} \label{prop_DCTp}
Let $(f_n) \subseteq \Lambda_p(\X)$ and $f$ be measurable. Assume that $(f_n)$ converges to $f$ almost everywhere and that there exists $g \in \Lambda_p(\X)$ such that $\sup_n |f_n| \leq g$ almost everywhere. Then $f \in \Lambda_p(\X)$ and $\|f_n -f \|_{\ap} \to 0$ as $n \to \infty$.
\end{proposition}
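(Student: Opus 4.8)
The plan is to deduce the statement from Proposition~\ref{DCT} applied to the nonnegative sequence $h_n \coloneq |f_n - f|^p$. I would begin with two elementary closure observations about almost-$L_p$ spaces. First, for any $q \geq 1$, if $h$ is measurable, $H \in \Lambda_q(\X)$ and $0 \leq h \leq H$ almost everywhere, then $h \in \Lambda_q(\X)$: any $E_\delta$ with $H\chi_{E_\delta^c} \in L_q(\X)$ also has $h\chi_{E_\delta^c} \in L_q(\X)$, since $0 \le h\chi_{E_\delta^c} \le H\chi_{E_\delta^c}$. Second, if $g \in \Lambda_p(\X)$ then $g^p \in \Lambda_1(\X)$: any $E_\delta$ with $g\chi_{E_\delta^c} \in L_p(\X)$ satisfies $\int_{E_\delta^c} g^p\,\dm < \infty$, i.e. $g^p\chi_{E_\delta^c} \in L_1(\X)$. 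Now, since $f$ is the almost-everywhere limit of the $f_n$ and $|f_n| \le g$ a.e., we have $|f| \le g$ a.e., so the first observation (with $q=p$, $H=g$) gives $f \in \Lambda_p(\X)$. Moreover $2^p g^p \in \Lambda_1(\X)$ by the second observation, and $h_n = |f_n-f|^p \le (|f_n|+|f|)^p \le 2^p g^p$ a.e., so $h_n \in \Lambda_1(\X)$ by the first observation (with $q=1$).

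Next I would apply Proposition~\ref{DCT} to the sequence $(h_n) \subseteq \Lambda_1(\X)$: it converges to $0$ almost everywhere (because $f_n \to f$ a.e.) and is dominated a.e. by $2^p g^p \in \Lambda_1(\X)$. Hence, for each $\delta > 0$ there is a measurable set $E_\delta$ with $\mu(E_\delta) < \delta$ such that
\[
\int_{E_\delta^c} |f_n - f|^p \, \dm = \int_{E_\delta^c} h_n \, \dm \longrightarrow 0 \qquad \text{as } n \to \infty \, ,
\]
which is exactly the statement that $(f_n)$ converges to $f$ almost in $L_p$.

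Finally I would pass to the $\F$-norm using the pointwise estimates $\min(|f_n-f|,1)^p \le |f_n-f|^p$ and $\min(|f_n-f|,1)^p \le 1$. Given $\varepsilon > 0$, choose $\delta = \varepsilon/2$ and let $E_\delta$ be as above, so that
\[
\|f_n - f\|_{\ap}^p = \int_{\X} \min(|f_n-f|,1)^p \, \dm \le \int_{E_\delta^c} |f_n - f|^p \, \dm + \mu(E_\delta) < \varepsilon
\]
for all sufficiently large $n$; hence $\|f_n - f\|_{\ap} \to 0$. (Alternatively, one could invoke Proposition~\ref{prop_alpha_metric} together with the implication, recalled in Section~\ref{section_almost_conv}, that convergence almost in $L_p$ implies $\ap$-convergence.) The argument is short and essentially routine: the only points that need a little care are verifying that the auxiliary functions indeed lie in $\Lambda_1(\X)$, so that Proposition~\ref{DCT} is applicable, and the final truncation estimate that converts control of $\int_{E_\delta^c} |f_n - f|^p\,\dm$ into control of $\|f_n - f\|_{\ap}$.
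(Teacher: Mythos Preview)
Your proof is correct and follows essentially the same route as the paper: apply Proposition~\ref{DCT} to $h_n = |f_n - f|^p$ with dominating function $2^p g^p \in \Lambda_1(\X)$ to obtain convergence almost in $L_p$, and then pass to $\|\cdot\|_{\ap}$ via the split $\int_{\X}\min(|f_n-f|,1)^p \le \int_{E_\delta^c}|f_n-f|^p + \mu(E_\delta)$. Your explicit verification of the closure properties (domination in $\Lambda_q$ and $g\in\Lambda_p \Rightarrow g^p\in\Lambda_1$) spells out steps the paper takes for granted, but the structure of the argument is identical.
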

\begin{proof}
Since $f_n \to f$ and $\sup_n |f_n| \leq g$ a.e., it follows that $|f| \leq g$ a.e., and hence $f \in \Lambda_p(\X)$. Moreover, we have $|f_n -f|^p \leq 2^p g^p$ a.e., which together with the facts that $\lim |f_n - f|^p = 0$ a.e. and $2^p g^p \in \Lambda_1(\X)$, implies, by Proposition \ref{DCT}, that for every $\delta > 0$ there exists $E_\delta$ with $\mu(E_\delta) < \delta$ such that \[\int_{E_\delta^c} |f_n - f|^p \, \rd \mu \to 0  \]
as $n \to \infty$, that is, $(f_n)$ converges to $f$ almost in $L_p$. \par 
Next, we provide a direct proof that convergence almost in $L_p$ implies convergence in the $\mathrm{F}$-norm $\| \cdot\|_{\ap}$; alternatively one could apply \cite[Proposition 2.9]{alves2024mode} and Proposition \ref{prop_alpha_metric}. Given $\varepsilon > 0$, there exist a measurable set $E_\varepsilon$, with $\mu(E_\varepsilon) < \varepsilon/2$, and $N_\varepsilon \in \N$ such that
\[\int_{E_\varepsilon^c} |f_n - f|^p \, \rd \mu < \frac \varepsilon 2 \qquad \forall \, n \geq N_\varepsilon \, .\]
Consequently, for every $n \geq N_\varepsilon$, 
\begin{align*}
\|f_n - f \|_{\ap}^p & = \int_X \min(|f_n-f|,1)^p \, \rd \mu \\
& = \int_{E_\varepsilon} \min(|f_n-f|,1)^p \, \rd \mu + \int_{E_\varepsilon^c} \min(|f_n-f|,1)^p \, \rd \mu \\
& \leq \mu(E_\varepsilon) + \int_{E_\varepsilon^c} |f_n-f|^p \, \rd \mu\\&  < \varepsilon
\end{align*}
as required.
\end{proof}
\par 
The previous result also holds if convergence almost everywhere is replaced by convergence in measure, serving as a counterpart to \cite[Theorem 7.8]{bartle1995elements} in the present setting.
\begin{proposition}
Let $(f_n) \subseteq \Lambda_p(\X)$ and $f$ be measurable. Assume that $(f_n)$ converges to $f$ in measure and that there exists $g \in \Lambda_p(\X)$ such that $\sup_n |f_n| \leq g$ almost everywhere. Then $f \in \Lambda_p(\X)$ and $\| f_n - f\|_{\ap} \to 0$ as $n \to \infty$.
\end{proposition}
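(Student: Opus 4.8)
The plan is to deduce this from the almost-everywhere version already proved in Proposition~\ref{prop_DCTp}, using the classical extraction of an a.e.-convergent subsequence from a sequence converging in measure, together with the fact that convergence in the $\F$-norm $\|\cdot\|_{\ap}$ is metric convergence and therefore obeys the subsequence principle.

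First I would record the following standard fact, valid on an arbitrary measure space: if a sequence converges to $f$ in measure, then some subsequence converges to $f$ $\mu$-almost everywhere. Indeed, choosing an increasing sequence $(k_j)$ with $\mu(|f_{k_j} - f| > 2^{-j}) < 2^{-j}$ and applying the Borel--Cantelli lemma, the set $\limsup_j \{|f_{k_j} - f| > 2^{-j}\}$ is $\mu$-null, so $f_{k_j} \to f$ outside a null set. Applying this to $(f_n)$ itself gives an a.e.-convergent subsequence, which is still bounded by $g$; Proposition~\ref{prop_DCTp} then immediately yields $f \in \Lambda_p(\X)$.

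For the convergence $\|f_n - f\|_{\ap} \to 0$, I would argue via the subsequence principle for metric convergence: it is enough to check that every subsequence of $(f_n)$ has a further subsequence $(f_{n_{k_j}})$ with $\|f_{n_{k_j}} - f\|_{\ap} \to 0$. So fix any subsequence; it still converges to $f$ in measure, hence by the first step it contains a further subsequence converging to $f$ almost everywhere, and that further subsequence satisfies $\sup_j |f_{n_{k_j}}| \leq g$ a.e. Proposition~\ref{prop_DCTp} applied to it gives $\|f_{n_{k_j}} - f\|_{\ap} \to 0$, and since the initial subsequence was arbitrary, we conclude $\|f_n - f\|_{\ap} \to 0$.

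I do not expect any real obstacle: the whole argument is a routine application of Proposition~\ref{prop_DCTp} and the subsequence principle. The only point deserving a line of care is the first step, namely that the extraction of an a.e.-convergent subsequence requires no $\sigma$-finiteness or completeness assumption on $(\X,\Sigma,\mu)$, which the Borel--Cantelli computation above settles. One could equivalently phrase the last step as a proof by contradiction — assume $\|f_n - f\|_{\ap} \not\to 0$, pass to a subsequence bounded away from $f$, extract an a.e.-convergent further subsequence and contradict Proposition~\ref{prop_DCTp} — but the subsequence-principle formulation is cleaner.
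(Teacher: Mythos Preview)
Your proposal is correct and follows essentially the same approach as the paper: extract an a.e.-convergent subsequence from convergence in measure and apply Proposition~\ref{prop_DCTp}, using the subsequence principle (which you note is equivalent to the contradiction argument the paper in fact uses). The only cosmetic difference is that you deduce $f \in \Lambda_p(\X)$ up front from Proposition~\ref{prop_DCTp} on a single extracted subsequence, whereas the paper obtains it at the end as a byproduct of the $\ap$-convergence via completeness of $\Lambda_p(\X)$.
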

\begin{proof}
Suppose that $(f_n)$ does not $\ap$-converge to $f$. Then, there exist $\varepsilon>0$ and a subsequence $(g_n)$ of $(f_n)$ such that $\|g_n - f \|_{\ap} \geq \varepsilon$. By hypothesis, $(g_n)$ converges to $f$ in measure, and hence there exists a subsequence $(h_n)$ of $(g_n)$ converging to $f$ almost everywhere. Given that $(h_n)$ is dominated by $g$, it follows by Proposition \ref{prop_DCTp} that $(h_n)$ converges to $f$ with respect to $\|\cdot \|_{\ap}$, a contradiction. Therefore, $(f_n)$ converges to $f$ in the $\F$-norm $\|\cdot \|_{\ap}$, which implies that $f \in A_p(\X)$.
\end{proof}

\section{Vitali convergence theorems} \label{section_vitali}

A classical Vitali convergence theorem provides necessary and sufficient conditions for a sequence in $L_p(\X)$ that converges in measure to a measurable function $f$ to also converge in $L_p(\X)$.   
\begin{theorem}[Vitali convergence theorem \cite{bartle1995elements}]\label{thm_vitali_classic} 
Let $(f_n) \subseteq L_p(\X)$ and $f$ be measurable. Then $(f_n)$ converges to $f$ in $L_p(\X)$ if, and only if,
\begin{enumerate}[(i)]
\item $(f_n)$ converges to $f$ in measure, 
\item for every $\varepsilon > 0$ there exist $E_\varepsilon \in \Sigma$ with $\mu(E_\varepsilon) < \infty$ and $\delta_\varepsilon > 0$ such that 
\[\sup_n \int_{E_\varepsilon^c} |f_n |^p \, \mathrm{d}\mu < \varepsilon^p\]
and, if $F \in \Sigma$ and $\mu(F) < \delta_\varepsilon$, then
\[\sup_n \int_{E_\varepsilon \cap F} |f_n |^p \, \mathrm{d}\mu < \varepsilon^p \, .\]
\end{enumerate}
\end{theorem}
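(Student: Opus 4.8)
The plan is to prove the two implications separately, reading condition (ii) as the conjunction of a \emph{tightness} statement (the $p$-integral of the $f_n$ outside $E_\varepsilon$ is uniformly small) and a \emph{uniform integrability} statement (the $p$-integral of the $f_n$ over small-measure subsets of $E_\varepsilon$ is uniformly small). For necessity, assume $\|f_n - f\|_p \to 0$. Then (i) follows immediately from Chebyshev's inequality, $\mu(|f_n-f|>\delta) \le \delta^{-p}\|f_n-f\|_p^p$. For (ii), given $\varepsilon>0$ I would pick $N$ with $\|f_n-f\|_p < \varepsilon/2$ for all $n \ge N$; since the finite collection $\{f_1,\dots,f_{N-1},f\}$ lies in $L_p(\X)$, it admits a common modulus of absolute continuity $\delta_\varepsilon$ and a common set $E_\varepsilon$ of finite measure outside which each member has $p$-integral at most $\varepsilon^p$ (and at most $(\varepsilon/2)^p$ for the single function $f$). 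For $n\ge N$ one transfers these bounds from $f$ to $f_n$ by the triangle inequality in $L_p(E_\varepsilon^c)$ and in $L_p(E_\varepsilon\cap F)$, while for $n<N$ they are built into the choice of $E_\varepsilon$ and $\delta_\varepsilon$.

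For sufficiency, the first task is to show $f \in L_p(\X)$, which is needed for the statement $\|f_n-f\|_p\to 0$ to make sense. Fixing any $\varepsilon$ and the associated $E_\varepsilon,\delta_\varepsilon$, I would partition $E_\varepsilon$ into at most $\lceil \mu(E_\varepsilon)/\delta_\varepsilon\rceil$ measurable pieces of measure $<\delta_\varepsilon$; each contributes less than $\varepsilon^p$ to $\int_{E_\varepsilon}|f_n|^p\,\dm$, which together with the tightness bound gives $\sup_n\|f_n\|_p<\infty$. Convergence in measure yields a subsequence converging a.e., so Fatou's lemma gives $\int_\X|f|^p\,\dm \le \liminf_n\int_\X|f_n|^p\,\dm<\infty$. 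The same Fatou argument applied on $E_\varepsilon^c$ gives $\int_{E_\varepsilon^c}|f|^p\,\dm\le\varepsilon^p$, and, being in $L_p(\X)$, $f$ also has absolutely continuous $p$-integrals.

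Next I would fix $\varepsilon>0$ with its $E_\varepsilon,\delta_\varepsilon$ and split $\int_\X|f_n-f|^p\,\dm$ over $E_\varepsilon^c$ and $E_\varepsilon$. The $E_\varepsilon^c$-part is at most $2^{p-1}\big(\int_{E_\varepsilon^c}|f_n|^p\,\dm + \int_{E_\varepsilon^c}|f|^p\,\dm\big) < 2^{p}\varepsilon^p$ by tightness and the Fatou bound. On $E_\varepsilon$, introduce $A_n=\{x\in E_\varepsilon : |f_n(x)-f(x)|>\eta\}$ for a threshold $\eta$ to be chosen; convergence in measure forces $\mu(A_n)\to 0$, so for large $n$ one has $\mu(A_n)<\delta_\varepsilon$, whence $\int_{A_n}|f_n|^p\,\dm<\varepsilon^p$ (uniform integrability) and $\int_{A_n}|f|^p\,\dm<\varepsilon^p$ (absolute continuity), giving $\int_{A_n}|f_n-f|^p\,\dm < 2^{p}\varepsilon^p$; meanwhile on $E_\varepsilon\setminus A_n$ the integrand is at most $\eta^p$, contributing at most $\eta^p\mu(E_\varepsilon)$. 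Choosing $\eta$ so that $\eta^p\mu(E_\varepsilon)<\varepsilon^p$ and summing yields $\|f_n-f\|_p^p < (2^{p+1}+1)\varepsilon^p$ for all large $n$; since $\varepsilon$ was arbitrary, $\|f_n-f\|_p\to 0$.

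The main obstacle is the bookkeeping in the sufficiency direction: one must arrange that a single pair $E_\varepsilon,\delta_\varepsilon$ simultaneously controls the tail of $f_n$, the tail of $f$ via Fatou, and the uniformly integrable mass of $f_n$ over the ``bad set'' $A_n\subseteq E_\varepsilon$, and that the auxiliary threshold $\eta$ is chosen only after $E_\varepsilon$ is fixed, so that $\eta^p\mu(E_\varepsilon)$ is negligible. Establishing $f\in L_p(\X)$ up front is the other delicate point, since (ii) bounds integrals only over small sets and not over $E_\varepsilon$ as a whole; this is exactly where the partition-of-$E_\varepsilon$ trick followed by Fatou is needed.
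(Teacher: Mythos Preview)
The paper does not supply a proof of this theorem; it is quoted from \cite{bartle1995elements} as background for the Vitali-type results that follow, so there is no paper proof to compare against and I assess your argument on its own.

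Your overall plan is the standard one, and both the necessity direction and the main splitting estimate in the sufficiency direction are correct. There is, however, a genuine gap in the step where you ``partition $E_\varepsilon$ into at most $\lceil \mu(E_\varepsilon)/\delta_\varepsilon\rceil$ measurable pieces of measure $<\delta_\varepsilon$''. On a general measure space $(\X,\Sigma,\mu)$ this need not be possible: if $E_\varepsilon$ contains an atom $A$ with $\mu(A)\ge\delta_\varepsilon$, then every measurable subset of $A$ has measure either $0$ or $\mu(A)$, so $A$ cannot be split further. Your bound $\sup_n\|f_n\|_p<\infty$, and with it the Fatou argument giving $f\in L_p(\X)$, therefore fails as written. (Note too that in your main estimate the bound $\int_{A_n}|f|^p\,\dm<\varepsilon^p$ uses the absolute-continuity modulus of $f$, which need not equal the $\delta_\varepsilon$ supplied by condition~(ii); you should intersect the two thresholds before choosing $n$ large.)

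The cleanest repair is to sidestep the question of whether $f\in L_p(\X)$ altogether: run your splitting estimate on $|f_n-f_m|^p$ instead of $|f_n-f|^p$, using that $(f_n)$ is Cauchy in measure (so $\mu(\{x\in E_\varepsilon:|f_n(x)-f_m(x)|>\eta\})\to 0$ as $n,m\to\infty$) and that the uniform-integrability bound in~(ii) now applies symmetrically to both $f_n$ and $f_m$ on the bad set. This shows $(f_n)$ is Cauchy in $L_p(\X)$; by completeness it has an $L_p$-limit $g$, and since $L_p$-convergence implies convergence in measure, $g=f$ a.e., whence $f\in L_p(\X)$ and $\|f_n-f\|_p\to 0$ follow at once.
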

\begin{remark}
If $\mu(X) < \infty,$ then the first part of condition $(ii)$ of Theorem \ref{thm_vitali_classic} becomes superfluous as one can simply take $E_\varepsilon = X$ for every $\varepsilon > 0$. In this case, condition $(ii)$ is replaced by \textit{uniform $p$-integrability} of the sequence $(f_n)$, that is, for every $\varepsilon>0$ there exists $\delta_\varepsilon > 0$ such that if $F \in \Sigma$ and $\mu(F) < \delta_\varepsilon$, then $\sup_n \int_{ F} |f_n |^p \, \mathrm{d}\mu < \varepsilon^p.$
\end{remark}

The next result states that for uniformly $p$-integrable sequences, $\ap$-convergence and convergence almost in $L_p$ are equivalent to $L_p$-convergence. This is analogous to the relationship between $L_p$-convergence and convergence in measure on finite measure spaces.

\begin{theorem} \label{thm_vitali_alpha}
~\par 
Let $(f_n) \subseteq L_p(\X)$ and $f$ be measurable. Then $(f_n)$ converges to $f$ in $L_p(\X)$ if, and only if,
\begin{enumerate}[(i)]
\item $(f_n)$ $\ap$-converges (or converges almost in $L_p$) to $f$, 
\item $(f_n)$ is uniformly $p$-integrable.
\end{enumerate}
\end{theorem}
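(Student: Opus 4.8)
The plan is to prove the two implications separately, using throughout that $\ap$-convergence is strictly stronger than convergence in measure on infinite measure spaces: this extra strength is exactly what forces the limit to lie in $L_p(\X)$ rather than merely in $\Lambda_p(\X)$, and what supplies the ``uniform tail'' control that uniform $p$-integrability does not provide on its own. For the forward direction, if $\|f_n-f\|_p\to 0$ then, as recalled in Section~\ref{section_almost_conv}, $(f_n)$ converges to $f$ almost in $L_p$ and hence $\ap$-converges, which gives condition $(i)$ in both of its stated forms. For condition $(ii)$, fix $\varepsilon>0$, choose $N$ with $\|f_n-f\|_p<\varepsilon/2$ for $n\geq N$, and use that the finitely many functions $f,f_1,\dots,f_{N-1}\in L_p(\X)$ have absolutely continuous $p$-integrals to produce a single $\delta_\varepsilon>0$ that works for all of them; then for $n\geq N$ and $\mu(F)<\delta_\varepsilon$, Minkowski's inequality on $L_p(F)$ gives $\big(\int_F|f_n|^p\,\dm\big)^{1/p}\leq\|f_n-f\|_p+\big(\int_F|f|^p\,\dm\big)^{1/p}<\varepsilon$, so $(f_n)$ is uniformly $p$-integrable.

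For the backward direction, assume $(i)$ in the weaker form (i.e.\ $\ap$-convergence) together with $(ii)$; since convergence almost in $L_p$ implies $\ap$-convergence, this simultaneously settles the ``almost in $L_p$'' variant. The first task is to show $f\in L_p(\X)$, and for this I would invoke Proposition~\ref{prop_L_vs_aL}: it suffices to verify that $f\in\Lambda_p(\X)$ and that $f$ has absolutely continuous $p$-integrals. For the membership $f\in\Lambda_p(\X)$, pick a subsequence $(g_k)$ of $(f_n)$ converging to $f$ almost in $L_p$ (Section~\ref{section_almost_conv}); given $\delta>0$, take the associated $E_\delta$ with $\mu(E_\delta)<\delta$ and then $k$ with $\int_{E_\delta^c}|g_k-f|^p\,\dm\leq 1$, so that $\int_{E_\delta^c}|f|^p\,\dm\leq 2^{p-1}\big(1+\|g_k\|_p^p\big)<\infty$. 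For the absolute continuity, use that $\ap$-convergence implies convergence in measure (estimate~\eqref{alpha_implie_measure}) to extract a further subsequence $(h_j)$ with $h_j\to f$ almost everywhere; then, for $F$ with $\mu(F)<\delta_\varepsilon$ as furnished by uniform $p$-integrability, Fatou's lemma gives $\int_F|f|^p\,\dm\leq\liminf_j\int_F|h_j|^p\,\dm\leq\sup_n\int_F|f_n|^p\,\dm<\varepsilon^p$. Hence $f\in L_p(\X)$.

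With $f\in L_p(\X)$ established, I would derive $\|f_n-f\|_p\to 0$ by splitting along the sets $(B_n)$ witnessing $\ap$-convergence (so $\mu(B_n^c)\to 0$ and $\int_{B_n}|f_n-f|^p\,\dm\to 0$): one estimates
\[
\|f_n-f\|_p^p\leq\int_{B_n}|f_n-f|^p\,\dm+2^{p-1}\int_{B_n^c}|f_n|^p\,\dm+2^{p-1}\int_{B_n^c}|f|^p\,\dm,
\]
where the first term vanishes by hypothesis, the second by uniform $p$-integrability (as $\mu(B_n^c)\to 0$), and the third by absolute continuity of the $p$-integral of $f\in L_p(\X)$. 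Alternatively, once $f\in L_p(\X)$ and $(f_n)\to f$ in measure are in hand, one could instead invoke the classical Vitali theorem (Theorem~\ref{thm_vitali_classic}), verifying its tail condition by the same $(B_n)$-splitting. I expect the only genuine subtlety to be the identification $f\in L_p(\X)$: uniform $p$-integrability alone does not bound $(f_n)$ in $L_p(\X)$ on an infinite measure space, so one truly needs both the $\Lambda_p$-membership obtained from the almost-$L_p$ subsequence \emph{and} the absolute continuity obtained from an almost-everywhere subsequence via Fatou; replacing $\ap$-convergence by mere convergence in measure here would break the argument.
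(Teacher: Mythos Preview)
Your proof is correct, but the converse direction follows a genuinely different route from the paper. The paper never establishes $f\in L_p(\X)$ directly; instead it shows that $(f_n)$ is \emph{Cauchy in $L_p$}: from the $\ap$-Cauchy property one obtains sets $(B_n)$ with $\mu(B_n^c)\to 0$ and $\int_{B_n\cap B_m}|f_n-f_m|^p\,\dm\to 0$, and uniform $p$-integrability controls $\int_{B_n^c\cup B_m^c}|f_n|^p$ and $\int_{B_n^c\cup B_m^c}|f_m|^p$ once $\mu(B_n^c\cup B_m^c)<\delta_\varepsilon$. Completeness of $L_p(\X)$ then yields an $L_p$-limit, which coincides with $f$ by uniqueness of $\ap$-limits. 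This avoids both subsequence extractions and the appeal to Proposition~\ref{prop_L_vs_aL}. Your approach, by contrast, is more constructive: you first pin down $f\in L_p(\X)$ explicitly (via $\Lambda_p$-membership from an almost-$L_p$ subsequence and absolute continuity from an a.e.\ subsequence through Fatou), and then prove convergence directly by the $(B_n)$-splitting. The Cauchy route is shorter and sidesteps the ``$f\in L_p$'' subtlety you flag as the main issue; your route makes that step transparent and showcases Proposition~\ref{prop_L_vs_aL}, at the cost of two subsequence extractions.
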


\begin{proof}

First, assume that $(f_n)$ converges to $f$ in $L_p$. The condition $(i)$ is clear. Regarding condition $(ii)$, it is known that it is satisfied by $L_p$-converging sequences.  \par
To establish the converse, we show that $(f_n)$ is Cauchy in $L_p$. Since convergence almost in $L_p$ implies $\ap$-convergence, it suffices to assume the latter to establish the result. Suppose that $(f_n)$ is uniformly $p$-integrable and let $\varepsilon>0$ and $\delta_\varepsilon$ be such that $\sup_n \int_{ F} |f_n |^p \, \mathrm{d}\mu < \varepsilon^p$ whenever $\mu(F) < \delta_\varepsilon$. Assume that $(f_n)$ $\ap$-converges to $f$. Then, $(f_n)$ is $\ap$-Cauchy and hence there exists a sequence of measurable sets $(B_n)$ with $\mu(B_n^c) \to 0$ such that 
\[ \int_{B_n \cap B_m} |f_n - f_m|^p \, \mathrm{d}\mu \to 0 \qquad \text{as} \ n,m \to \infty \, . \]
Let $N_1(\varepsilon), N_2(\varepsilon) \in \mathbb{N}$ be such that 
\[\forall n \geq N_1(\varepsilon) \qquad \mu(B_n^c) < \frac{\delta_\varepsilon}{2}\]
and 
\[\forall n,m \geq N_2(\varepsilon) \qquad \int_{B_n \cap B_m} |f_n - f_m|^p \, \mathrm{d}\mu < \varepsilon^p \, . \]
Then, for $n,m \geq N_\varepsilon = \max\{N_1(\varepsilon), N_2(\varepsilon) \}$
\begin{align*}
\int_{X} |f_n - f_m|^p \, \mathrm{d}\mu & = \int_{B_n \cap B_m} |f_n - f_m|^p \, \mathrm{d}\mu + \int_{B_n^c \cup B_m^c} |f_n - f_m|^p \, \mathrm{d}\mu \\
& < \varepsilon^p + 2^{p-1} \left(\int_{B_n^c \cup B_m^c} |f_n|^p \, \mathrm{d}\mu + \int_{B_n^c \cup B_m^c} |f_m|^p \, \mathrm{d}\mu  \right).
\end{align*}
Furthermore, for $n,m \geq N_\varepsilon$ we have $\mu(B_n^c \cup B_m^c) < \delta_\varepsilon $, hence
\[\int_{B_n^c \cup B_m^c} |f_n|^p \, \mathrm{d}\mu \leq \sup_k \int_{B_n^c \cup B_m^c} |f_k|^p \, \mathrm{d}\mu < \varepsilon^p \]
and similarly for the remaining term. Consequently, for $n,m \geq N_\varepsilon$
\[\int_{X} |f_n - f_m|^p \, \mathrm{d}\mu < (1+2^p) \varepsilon^p \]
whence
\[\|f_n - f_m \|_p < (1+2^p)^{\frac{1}{p}} \, \varepsilon \]
which finishes the proof.
\end{proof}

We conclude this section with a result that serves as the counterpart of Theorem \ref{thm_vitali_classic} for the space $\Lambda_p(\X)$ endowed with the $\F$-norm $\| \cdot \|_{\ap}$.

\begin{theorem} \label{thm_vitali_lambda}
Let $(f_n) \subseteq \Lambda_p(\X)$ and $f$ be measurable. Then $(f_n)$ $\ap$-converges to $f$ if, and only if,
\begin{enumerate}[(i)]
\item $(f_n)$ converges to $f$ (locally) in measure,
\item for every $\varepsilon > 0$ there exists a measurable set $E_\varepsilon$ with $\mu(E_\varepsilon) < \infty$ such that \[\sup_n \|f_n \chi_{E_\varepsilon^c} \|_{\ap} < \varepsilon \, . \]
\end{enumerate}
\end{theorem}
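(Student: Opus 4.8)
The plan is to deduce this result by combining the characterization of $\ap$-convergence in terms of the $\F$-norm (Proposition~\ref{prop_alpha_metric}) with the estimate \eqref{alpha_implie_measure} relating $\|\cdot\|_{\ap}$ to convergence in measure, and with the splitting trick already used in the proof of Theorem~\ref{thm_vitali_alpha}. For the forward direction, assume $(f_n)$ $\ap$-converges to $f$. By Proposition~\ref{prop_alpha_metric} this means $\|f_n - f\|_{\ap}\to 0$, and then \eqref{alpha_implie_measure} (applied to $f_n - f$, letting $\delta_0\to 0$ after $n\to\infty$) gives convergence in measure, hence local convergence in measure via \eqref{measure_implies_local}; this settles (i). For (ii), I would use that $\ap$-convergence means there is a sequence $(B_n)$ with $\mu(B_n^c)\to 0$ and $\int_{B_n}|f_n-f|^p\,\dm\to 0$; since $f\in\Lambda_p(\X)$, fix $E$ with $\mu(E)$ small and $\int_{E^c}|f|^p\,\dm<\infty$, then for all large $n$ write $\|f_n\chi_{E^c}\|_{\ap}^p \le \mu(E^c\cap B_n^c) + \int_{E^c\cap B_n}|f_n|^p\,\dm \le \mu(B_n^c) + 2^{p-1}\int_{B_n}|f_n-f|^p\,\dm + 2^{p-1}\int_{E^c}|f|^p\,\dm$, and this last sum can be made $<\varepsilon^p$ by enlarging $E$ so that $2^{p-1}\int_{E^c}|f|^p\,\dm$ is small; the finitely many initial $f_n$ are each in $\Lambda_p(\X)$ and are handled by further enlarging $E_\varepsilon$ (taking a finite union). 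The measurability and finiteness of $E_\varepsilon$ is automatic since each enlargement adds a set of finite measure.

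For the converse, assume (i) and (ii) and aim to show $\|f_n - f\|_{\ap}\to 0$, which by Proposition~\ref{prop_alpha_metric} is the desired conclusion. The natural route is to mimic the Cauchy argument in Theorem~\ref{thm_vitali_alpha}: given $\varepsilon>0$, pick $E_\varepsilon$ of finite measure with $\sup_n\|f_n\chi_{E_\varepsilon^c}\|_{\ap}<\varepsilon$. On $E_\varepsilon$, local convergence in measure gives $\mu(E_\varepsilon\cap\{|f_n-f|>\delta\})\to 0$ for each $\delta$, so one estimates $\|(f_n-f)\chi_{E_\varepsilon}\|_{\ap}^p \le \int_{E_\varepsilon\cap\{|f_n-f|\le\delta\}}|f_n-f|^p\,\dm + \mu(E_\varepsilon\cap\{|f_n-f|>\delta\}) \le \delta^p\mu(E_\varepsilon) + \mu(E_\varepsilon\cap\{|f_n-f|>\delta\})$, which is small for $\delta$ small and $n$ large. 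Off $E_\varepsilon$, I want $\|(f_n-f)\chi_{E_\varepsilon^c}\|_{\ap}$ small; this is where I need a control on $\|f\chi_{E_\varepsilon^c}\|_{\ap}$. The point is that $f$ itself must lie in $\Lambda_p(\X)$ — this should follow from (ii) together with (i): taking a subsequence converging a.e. (on each finite-measure piece) and Fatou's lemma, $\int_{E^c}|f|^p\,\dm \le \liminf_n \int_{E^c}|f_n|^p\,\dm$, but since we only control $\|f_n\chi_{E^c}\|_{\ap}$ rather than the full $L_p$ norm, the cleaner statement is $\|f\chi_{E_\varepsilon^c}\|_{\ap}\le \liminf_n\|f_n\chi_{E_\varepsilon^c}\|_{\ap}\le\varepsilon$ by Fatou applied to $\min(|f_n-f|,1)^p$ along an a.e.-convergent subsequence; one must be slightly careful that a.e.\ convergence is only available after passing to a subsequence on a $\sigma$-finite exhaustion, but $\|f\chi_{E_\varepsilon^c}\|_{\ap}$ is a single fixed quantity so a subsequence suffices. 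Then $\|(f_n-f)\chi_{E_\varepsilon^c}\|_{\ap}\le \|f_n\chi_{E_\varepsilon^c}\|_{\ap} + \|f\chi_{E_\varepsilon^c}\|_{\ap} < 2\varepsilon$ for all $n$, by subadditivity of $\|\cdot\|_{\ap}$ (it is an $\F$-norm). Combining the two regions, $\|f_n-f\|_{\ap}\le \|(f_n-f)\chi_{E_\varepsilon}\|_{\ap} + \|(f_n-f)\chi_{E_\varepsilon^c}\|_{\ap}$ is $<$ a constant times $\varepsilon$ for $n$ large.

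I expect the main obstacle to be the converse direction, specifically the bookkeeping around a.e.\ convergence: local convergence in measure only yields a.e.-convergent subsequences after restricting to finite-measure sets, so establishing $\|f\chi_{E_\varepsilon^c}\|_{\ap}\le\varepsilon$ and, more importantly, reaching the conclusion $\|f_n-f\|_{\ap}\to 0$ for the full sequence (not just a subsequence) requires the standard subsequence-of-subsequence argument — assume $\|f_n-f\|_{\ap}\not\to 0$, pass to a subsequence bounded away from $0$, extract from it an a.e.-convergent further subsequence on a $\sigma$-finite hull of $E_\varepsilon$, and derive a contradiction with the two-region estimate. A secondary subtlety is that $E_\varepsilon$ depends on $\varepsilon$, so the $\sigma$-finiteness used to extract a.e.-convergent subsequences is confined to $E_\varepsilon$, which is fine since $\mu(E_\varepsilon)<\infty$. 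The rest of the argument is routine manipulation of the elementary inequality $\min(a+b,1)^p \le 2^{p-1}(\min(a,1)^p+\min(b,1)^p)$ and the basic bounds already recorded in Section~\ref{section_preliminaries}.
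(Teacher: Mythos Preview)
Your forward direction is essentially the paper's (control the tail of $f_n$ through the tail of $f$ plus an $\ap$-small remainder, then handle the finitely many initial terms separately and take a union), just with slightly different bookkeeping. The converse, however, takes a genuinely different route. The paper never analyzes $f$: it shows directly that $(f_n)$ is $\ap$-Cauchy by splitting $\|f_n - f_m\|_{\ap}$ into the piece on $E_\varepsilon$ (handled by local convergence in measure via $\|\cdot\|_{\alpha,E_\varepsilon}$) and the piece on $E_\varepsilon^c$ (handled by condition (ii) applied to both $f_n$ and $f_m$), and then appeals to completeness. Your route instead proves $\|f_n-f\|_{\ap}\to 0$ directly, which forces you to establish $\|f\chi_{E_\varepsilon^c}\|_{\ap}\le\varepsilon$; this does work via Fatou along an a.e.-convergent subsequence on the $\sigma$-finite set $E_\varepsilon\cup\bigcup_n\{f_n\neq 0\}$ (each $\{f_n\neq 0\}$ is $\sigma$-finite since $\|f_n\|_{\ap}<\infty$), but outside that set all $f_n$ vanish and you must still argue $f=0$ a.e.\ there from local convergence in measure alone---precisely the limit-identification step the paper's proof also leaves implicit when matching the Cauchy limit with the given $f$. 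So both approaches carry the same hidden caveat; the paper's Cauchy trick is simply shorter and sidesteps the subsequence machinery entirely. One minor point: your closing subsequence-of-subsequence argument is unnecessary, since $\|f\chi_{E_\varepsilon^c}\|_{\ap}$ is a single fixed number---once any one subsequence yields the Fatou bound, the two-region estimate already holds for the full sequence.
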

\begin{proof}
First, assume that $(f_n)$ $\ap$-converges to $f$ (hence $f \in \Lambda_p(\X)$) and let $\varepsilon > 0$. Since $\ap$-convergence implies (local) convergence in measure, we put our attention on the second condition. Let $N = N(\varepsilon) \in \N$ be such that $\|f_n - f \|_{\ap} < \varepsilon/2$ whenever $n > N$. Since $f \in \Lambda_p(\X)$, there exists a measurable set $F_\varepsilon$ with $\mu(F_\varepsilon) < \varepsilon$ so that $f \chi_{F_\varepsilon^c} \in L_p(\X)$. Then, $\| f \chi_{F_\varepsilon^c \cap G_\varepsilon^c} \|_p < \varepsilon/2$ for some subset $G_\varepsilon \subseteq \X$ of finite measure. Therefore, for $n > N$, 
\begin{align*}
\| f_n \chi_{F_\varepsilon^c \cap G_\varepsilon^c} \|_{\ap} & \leq \| (f_n -f) \chi_{F_\varepsilon^c \cap G_\varepsilon^c} \|_{\ap} + \| f \chi_{F_\varepsilon^c \cap G_\varepsilon^c} \|_{\ap} \\
& \leq \| f_n -f \|_{\ap} + \| f \chi_{F_\varepsilon^c \cap G_\varepsilon^c} \|_{p} \\
& < \varepsilon \, .
\end{align*}
For each $n = 1, \ldots, N$, since $f_n \in \Lambda_p(\X)$, there exists $H_{\varepsilon,n}$ with $\mu(H_{\varepsilon,n}) < \varepsilon$ such that $f_n \chi_{H_{\varepsilon,n}^c} \in L_p(\X)$, and so $\|f_n \chi_{H_{\varepsilon,n}^c \cap I_{\varepsilon,n}^c } \|_p < \varepsilon $ for some $I_{\varepsilon,n}$ of finite measure. Define a measurable set $E_\varepsilon$ as 
\[E_\varepsilon = F_\varepsilon \cup G_\varepsilon  \cup \bigcup_{n=1}^N H_{\varepsilon,n} \cup \bigcup_{n=1}^N I_{\varepsilon,n} \]
and note that $\mu(E_\varepsilon) < \infty$. Thus, for $n > N$ we have
\begin{align*}
\| f_n \chi_{E_\varepsilon^c} \|_{\ap} \leq \|f_n \chi_{F_\varepsilon^c \cap G_\varepsilon^c} \|_{\ap} < \varepsilon
\end{align*}
and for $n = 1, \ldots, N$ it holds
\begin{align*}
\| f_n \chi_{E_\varepsilon^c} \|_{\ap} \leq \|f_n \chi_{H_{\varepsilon,n}^c \cap I_{\varepsilon,n}^c} \|_{\ap} \leq \|f_n \chi_{H_{\varepsilon,n}^c \cap I_{\varepsilon,n}^c} \|_{p} < \varepsilon
\end{align*}
establishing one direction. \par 
Now, assume that $(f_n)$ converges to $f$ locally in measure and that condition $(ii)$ holds. We prove that $(f_n)$ is $\ap$-Cauchy. Let $\varepsilon > 0$ and let $E_\varepsilon \in \Sigma$ be such that $\mu(E_\varepsilon) < \infty$ and $\|f_n \chi_{E_\varepsilon^c} \|_{\ap} < \varepsilon/4$ for each $n \in \N$. Note that, for every $n,m \in \N$,
\begin{align*}
\|f_n - f_m \|_{\ap} & \leq \|(f_n - f_m)\chi_{E_\varepsilon} \|_{\ap} +  \|f_n \chi_{E_\varepsilon^c} \|_{\ap} + \|f_m \chi_{E_\varepsilon^c} \|_{\ap}\\
 & < \|(f_n - f_m)\chi_{E_\varepsilon} \|_{\ap} + \varepsilon/2 \, .
\end{align*}
Moreover, since $\mu(E_\varepsilon) < \infty$, the local convergence in measure of $(f_n)$ towards $f$ yields the existence of $N_\varepsilon \in \N$ so that $\|f_n - f_m \|_{\alpha, E_\varepsilon} < (\varepsilon/2)^p$ for all $n,m \geq N_\varepsilon$.


 Thus, for $n,m \geq N_\varepsilon$,
\begin{align*}
\|f_n - f_m \|_{\ap} & < \|(f_n - f_m)\chi_{E_\varepsilon} \|_{\ap} + \varepsilon/2 \\
& \leq  \|f_n - f_m \|_{\alpha, E_\varepsilon}^{1/p} + \varepsilon/2 \\
& < \varepsilon 
\end{align*} 
 which completes the proof.
\end{proof}

\begin{remark}
If $\mu(\X)$ is finite, then condition $(ii)$ of Theorem \ref{thm_vitali_lambda} is automatically satisfied by setting $E_\varepsilon = \X$ for every $\varepsilon > 0$, thereby recovering the equivalence between asymptotic $L_p$-convergence and (local) convergence in measure on finite measure spaces.
\end{remark}

\begin{remark}
We observe that Theorems \ref{thm_vitali_alpha} and \ref{thm_vitali_lambda} provide a decomposition of Theorem \ref{thm_vitali_classic}. Condition \textit{(ii)} of Theorem \ref{thm_vitali_alpha} corresponds to the second part of condition \textit{(ii)} of Theorem \ref{thm_vitali_classic}, whereas condition \textit{(ii)} of Theorem \ref{thm_vitali_lambda} is the $\ap$-version of the first part of condition \textit{(ii)} of Theorem \ref{thm_vitali_classic}. 
\end{remark}

\section{Approximation and separability} \label{section_separable}
Denote by $\mathcal{S}$ the subspace of $L_0(\X)$ that comprises those functions $s : \X \to \R$ that are linear combinations of characteristic functions of measurable sets, that is, $s = \sum_{i=1}^k a_i \chi_{F_i}$, for some $a_i \in \R$ and $F_i \in \Sigma$, $i = 1, \ldots, k$. Elements of $\mathcal{S}$ are called \textit{simple functions}. \par 
The first result of this section establishes the density of simple functions in $\Lambda_p(\X)$, as a natural extension of \cite[Proposition 6.7]{folland1999real} to our setting.
\begin{proposition}
The subspace $\mathcal{S} \cap \Lambda_p(\X)$ is dense in $\Lambda_p(\X)$, that is, given $f \in \Lambda_p(\X)$ there exists a sequence $(s_n) \subseteq \Lambda_p(\X)$ of simple functions such that $\|s_n - f\|_{\ap} \to 0$ as $n \to \infty$.
\end{proposition}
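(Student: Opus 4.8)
The plan is to combine the standard pointwise construction of approximating simple functions with the dominated convergence result of Proposition~\ref{prop_DCTp}. Fix $f \in \Lambda_p(\X)$. Since $\Lambda_p(\X) \subseteq L_0(\X)$, the function $f$ is finite $\mu$-a.e., so the classical construction (see \cite{folland1999real}) produces a sequence $(s_n)$ of simple functions with $|s_n| \le |s_{n+1}| \le |f|$ pointwise and $s_n(x) \to f(x)$ at every point where $f(x)$ is finite; in particular $s_n \to f$ $\mu$-a.e.

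The next step is to check that each $s_n$ actually lies in $\Lambda_p(\X)$, so that $(s_n) \subseteq \mathcal{S} \cap \Lambda_p(\X)$. This is immediate from the fact that $\Lambda_p(\X)$ is solid: given $\delta > 0$, pick $E_\delta \in \Sigma$ with $\mu(E_\delta) < \delta$ and $f \chi_{E_\delta^c} \in L_p(\X)$; since $|s_n \chi_{E_\delta^c}| \le |f \chi_{E_\delta^c}|$, we get $s_n \chi_{E_\delta^c} \in L_p(\X)$, hence $s_n \in \Lambda_p(\X)$. With this in hand, apply Proposition~\ref{prop_DCTp} with dominating function $g = |f|$, which belongs to $\Lambda_p(\X)$ because $f$ does: from $s_n \to f$ a.e. and $\sup_n |s_n| \le |f| = g \in \Lambda_p(\X)$ we conclude $\|s_n - f\|_{\ap} \to 0$, which is precisely the assertion. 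Equivalently one may argue by hand: given $\varepsilon > 0$, choose $E \in \Sigma$ with $\mu(E) < \varepsilon/2$ and $f\chi_{E^c} \in L_p(\X)$; on $E^c$ one has $|s_n - f|^p \le 2^p |f|^p \in L_1(E^c)$ and $|s_n - f|^p \to 0$ pointwise, so the classical dominated convergence theorem gives $\int_{E^c} |s_n - f|^p \, \dm < \varepsilon/2$ for all large $n$, whence $\|s_n - f\|_{\ap}^p \le \mu(E) + \int_{E^c} |s_n - f|^p \, \dm < \varepsilon$.

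I do not expect a genuine obstacle here. The only place where the argument departs from the classical proof for $L_p(\X)$ is that a global dominated convergence theorem is unavailable: $|f|$ need not be $\mu$-integrable, and in general $\Lambda_p(\X) \not\subseteq \Lambda_1(\X)$, so one cannot dominate $\min(|s_n - f|,1)^p$ by an integrable function on all of $\X$. The resolution is exactly to localize to the set $E^c$ on which $f$ genuinely lies in $L_p$ (equivalently, to invoke Proposition~\ref{prop_DCTp}, which was tailored to this situation), absorbing the small leftover set $E$ into $\varepsilon$ through the crude bound $\min(\,\cdot\,,1)^p \le 1$.
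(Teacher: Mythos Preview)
Your proof is correct and follows essentially the same route as the paper: build the standard monotone simple approximants with $|s_n| \le |f|$, note they lie in $\Lambda_p(\X)$, and apply the tailored dominated convergence result (the paper invokes Proposition~\ref{DCT} on $|s_n - f|^p \le 2^p|f|^p \in \Lambda_1(\X)$, which is precisely the engine behind the Proposition~\ref{prop_DCTp} you cite). One small correction to your closing commentary: a global integrable dominant \emph{is} available, since $\min(|s_n - f|,1)^p \le \min(2|f|,1)^p \le 2^p \min(|f|,1)^p$ and $\|f\|_{\ap}^p = \int_\X \min(|f|,1)^p \, \dm < \infty$, so the classical DCT applied directly to $\|s_n - f\|_{\ap}^p$ would also work; this does not affect the validity of your argument.
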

\begin{proof}
Let $f \in \Lambda_p(\X)$. By applying \cite[Lemma 2.11]{bartle1995elements} to the positive and negative parts of $f$, we construct a sequence of simple functions $(s_n)$ that converges to $f$ pointwise and $\sup_n |s_n| \leq |f|$. Clearly, $(s_n)\subseteq \Lambda_p(\X)$. Additionally, $|s_n - f|^p \leq 2^p|f|^p \in \Lambda_1(\X)$ and $|s_n - f|^p \to 0$ pointwise, and hence, by Proposition \ref{DCT}, we conclude that $(s_n)$ converges to $f$ almost in $L_p$. The result follows.
\end{proof}

Note that $s \in \mathcal{S} \cap \Lambda_p(\X)$ if, and only if, for every $\delta > 0$ there exists a measurable set $E_\delta$ with $\mu(E_\delta) < \delta$ such that the set $\{s \neq 0 \} \cap E_\delta^c$ has finite measure. \par 
The next result establishes the density of $L_p(\X)$ in $\Lambda_p(\X)$. Together with Theorem \ref{mainthm1}, this implies that on finite measure spaces, $L_p(\X)$ is dense in $L_0(\X)$ with respect to the topology of (local) convergence in measure.

\begin{proposition} \label{prop_Lp_dense}
The Lebesgue space $L_p(\X)$ is dense in $\Lambda_p(\X)$, that is, given $f \in \Lambda_p(\X)$ and $\varepsilon > 0$, there exists $g_\varepsilon \in L_p(\X)$ such that $\| g_\varepsilon - f \|_{\ap} < \varepsilon$.
\end{proposition}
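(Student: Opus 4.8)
The plan is to prove density of $L_p(\X)$ in $\Lambda_p(\X)$ directly by truncating a given $f \in \Lambda_p(\X)$ to its "good" part. Given $\varepsilon > 0$, use the definition of $\Lambda_p(\X)$ to pick a measurable set $E$ with $\mu(E) < \varepsilon/2^{p}$ (or some such small threshold) such that $f\chi_{E^c} \in L_p(\X)$. Then set $g_\varepsilon = f\chi_{E^c}$, which belongs to $L_p(\X)$ by construction. It remains to estimate $\|g_\varepsilon - f\|_{\ap}$.

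**The key computation** is that $g_\varepsilon - f = -f\chi_{E}$, so
\[
\|g_\varepsilon - f\|_{\ap}^p = \int_{\X} \min(|f\chi_{E}|, 1)^p \, \dm = \int_{E} \min(|f|,1)^p \, \dm \leq \mu(E) < \frac{\varepsilon}{2^p} < \varepsilon^p,
\]
assuming $\varepsilon < 1$ and choosing $\mu(E) < \varepsilon^p$; hence $\|g_\varepsilon - f\|_{\ap} < \varepsilon$. (For $\varepsilon \geq 1$ the claim is trivial since $\|g_\varepsilon - f\|_{\ap} \leq \mu(\{f\chi_E \neq 0\})^{1/p}$ can be made $< 1 \leq \varepsilon$, or one simply applies the case $\varepsilon = 1/2$.) The point is that the $\F$-norm $\|\cdot\|_{\ap}$ only "sees" a set of small measure with weight at most $\mu(E)$, so discarding $f$ on a set of arbitrarily small measure moves us by an arbitrarily small amount in the $\F$-norm — precisely because the truncation $\min(\cdot,1)$ tames the large values of $f$ on $E$.

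**There is essentially no obstacle here**; the result is almost immediate from the definition of $\Lambda_p(\X)$ in \eqref{Ap} and the definition of the $\F$-norm in \eqref{Fnorm}. The only minor care needed is the bookkeeping between $\mu(E)$, $\varepsilon$, and $\varepsilon^p$, and noting that we do not even need the full strength of $f\chi_{E^c} \in L_p(\X)$ beyond the fact that it furnishes an element of $L_p(\X)$ close to $f$. The proof I would write is therefore just a few lines: fix $\varepsilon \in (0,1)$ without loss of generality, choose $E \in \Sigma$ with $\mu(E) < \varepsilon^p$ and $f\chi_{E^c} \in L_p(\X)$, take $g_\varepsilon = f\chi_{E^c}$, and conclude via the displayed estimate that $\|g_\varepsilon - f\|_{\ap} < \varepsilon$. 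As a closing remark one notes that combined with Theorem~\ref{mainthm1}, the case $\mu(\X) < \infty$ gives that $L_p(\X)$ is dense in $L_0(\X)$ for the topology of (local) convergence in measure.
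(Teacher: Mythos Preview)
Your proof is correct and follows essentially the same approach as the paper: choose $E\in\Sigma$ with $\mu(E)<\varepsilon^p$ and $f\chi_{E^c}\in L_p(\X)$, set $g_\varepsilon=f\chi_{E^c}$, and bound $\|g_\varepsilon-f\|_{\ap}^p=\int_E\min(|f|,1)^p\,\dm\le\mu(E)<\varepsilon^p$. The only cosmetic difference is that the paper takes $\mu(E)<\varepsilon^p$ from the outset, so no case split on $\varepsilon\ge 1$ is needed; your initial threshold $\varepsilon/2^p$ is a red herring (it is not always $<\varepsilon^p$), but you correctly revert to $\mu(E)<\varepsilon^p$ in the final version.
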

\begin{proof}
Let $f \in \Lambda_p(\X)$ and $\varepsilon > 0$. There exists a measurable set $E_\varepsilon$ with $\mu(E_\varepsilon) < \varepsilon^p$ such that $g_\varepsilon = f \chi_{E_\varepsilon^c}$ belongs to $L_p(\X)$. We have:
\begin{align*}
\|f - g_\varepsilon \|_{\ap}^p & = \int_{\X} \min(|f-g_\varepsilon|,1)^p \, \dm \\
& = \int_{E_\varepsilon^c} \min(|f-f \chi_{E_\varepsilon^c}|,1)^p \, \dm + \int_{E_\varepsilon} \min(|f-f \chi_{E_\varepsilon^c}|,1)^p \, \dm \, .
\end{align*}
Note that the first integral vanishes while the second is controlled from above by $\mu(E_\varepsilon)$. The result follows.
\end{proof}

Now, consider $\X = \R^d$ endowed with the Borel $\sigma$-algebra and the Lebesgue measure. We prove that $C_c^\infty(\R^d)$, the space of smooth and compactly supported functions on $\R^d$, is dense in $\Lambda_p(\R^d)$, and that $\Lambda_p(\R^d)$ is separable.
\begin{proposition} \label{prop_smooth_dense}
The space $C_c^\infty(\R^d)$ is dense in $\Lambda_p(\R^d)$, that is, given $f \in \Lambda_p(\R^d)$ and $\varepsilon > 0$, there exists $\varphi_\varepsilon \in C_c^\infty(\R^d)$ such that $\|f - \varphi_\varepsilon \|_{\ap} < \varepsilon$.
\end{proposition}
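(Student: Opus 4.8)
The plan is to derive the statement by chaining two density facts through the elementary comparison $\|h\|_{\ap} \le \|h\|_p$. The first ingredient is Proposition \ref{prop_Lp_dense}, which gives that $L_p(\R^d)$ is dense in $\Lambda_p(\R^d)$ for the $\F$-norm $\|\cdot\|_{\ap}$. The second ingredient is the classical fact that $C_c^\infty(\R^d)$ is dense in $L_p(\R^d)$ with respect to the usual $L_p$-norm (for instance via truncation of support followed by mollification). To transfer the latter into convergence in the $\F$-norm, observe that $\min(|h|,1) \le |h|$ pointwise, so
\[
\|h\|_{\ap}^p = \int_{\R^d} \min(|h|,1)^p \, \dm \le \int_{\R^d} |h|^p \, \dm = \|h\|_p^p
\]
for every $h \in L_p(\R^d)$; in particular $C_c^\infty(\R^d) \subseteq L_p(\R^d) \subseteq \Lambda_p(\R^d)$, and $L_p$-convergence implies convergence in $\|\cdot\|_{\ap}$.

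With these in hand the argument is immediate. Given $f \in \Lambda_p(\R^d)$ and $\varepsilon > 0$, first apply Proposition \ref{prop_Lp_dense} to obtain $g \in L_p(\R^d)$ with $\|f - g\|_{\ap} < \varepsilon/2$; then pick $\varphi_\varepsilon \in C_c^\infty(\R^d)$ with $\|g - \varphi_\varepsilon\|_p < \varepsilon/2$. Since $\|\cdot\|_{\ap}$ satisfies the triangle inequality (it is an $\F$-norm; subadditivity follows from the subadditivity of $t \mapsto \min(t,1)$ on $[0,\infty)$ together with Minkowski's inequality), one concludes
\[
\|f - \varphi_\varepsilon\|_{\ap} \le \|f - g\|_{\ap} + \|g - \varphi_\varepsilon\|_{\ap} \le \|f - g\|_{\ap} + \|g - \varphi_\varepsilon\|_p < \varepsilon,
\]
which is the claim.

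There is essentially no obstacle here: the content is carried entirely by Proposition \ref{prop_Lp_dense} and the standard $L_p$-approximation theorem, and the only elementary points to record are the inequality $\|\cdot\|_{\ap} \le \|\cdot\|_p$ on $L_p(\R^d)$ and the inclusion $C_c^\infty(\R^d) \subseteq \Lambda_p(\R^d)$. If a self-contained route is preferred, one may instead first use Proposition \ref{prop_Lp_dense} to reduce $f$ to a function in $L_p(\R^d)$, then restrict its support to a large ball and mollify, estimating each step directly in $\|\cdot\|_{\ap}$; but invoking the classical result keeps the proof short.
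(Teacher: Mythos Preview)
Your proof is correct and follows essentially the same approach as the paper: both arguments invoke Proposition~\ref{prop_Lp_dense} to approximate $f$ by some $g\in L_p(\R^d)$ in the $\F$-norm, then use the classical density of $C_c^\infty(\R^d)$ in $L_p(\R^d)$ together with the comparison $\|\cdot\|_{\ap}\le\|\cdot\|_p$ and the triangle inequality.
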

\begin{proof}
Let $f \in \Lambda_p(\R^d)$ and $\varepsilon > 0$. There exists $g_\varepsilon \in L_p(\R^d)$ such that $\| f-g_\varepsilon\|_{\ap} < \varepsilon/2$. Moreover, since $C_c^\infty(\R^d)$ is dense in $L_p(\R^d)$, there exists $\varphi_\varepsilon \in C_c^\infty(\R^d)$ such that $\| g_\varepsilon - \varphi_\varepsilon \|_{\ap} \leq \| g_\varepsilon - \varphi_\varepsilon\|_p < \varepsilon/2 $. The triangle inequality yields the desired conclusion.
\end{proof}
\begin{proposition} \label{prop_Ap_separable}
The space $\Lambda_p(\R^d)$ is separable.
\end{proposition}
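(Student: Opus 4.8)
The plan is to exhibit a countable dense subset. The natural candidate is the set $\mathcal{D}$ of functions in $C_c^\infty(\R^d)$ with rational coefficients in a fixed countable dense family — more concretely, I would start from a countable dense subset $\mathcal{D}$ of $L_p(\R^d)$ (which exists, since $L_p(\R^d)$ is separable for $1 \leq p < \infty$; one may take finite rational linear combinations of indicator functions of dyadic cubes, or rational-coefficient combinations of a fixed countable dense subset of $C_c^\infty(\R^d)$). The claim is that this same set $\mathcal{D}$ is dense in $\Lambda_p(\R^d)$ with respect to $\|\cdot\|_{\ap}$.

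First I would fix $f \in \Lambda_p(\R^d)$ and $\varepsilon > 0$. By Proposition~\ref{prop_Lp_dense}, $L_p(\R^d)$ is dense in $\Lambda_p(\R^d)$, so there is $g_\varepsilon \in L_p(\R^d)$ with $\|f - g_\varepsilon\|_{\ap} < \varepsilon/2$. Next, by density of $\mathcal{D}$ in $L_p(\R^d)$, choose $h_\varepsilon \in \mathcal{D}$ with $\|g_\varepsilon - h_\varepsilon\|_p < \varepsilon/2$. The key elementary inequality is that $\|\cdot\|_{\ap} \leq \|\cdot\|_p$ on $L_p(\R^d)$, since $\min(|u|,1) \leq |u|$ pointwise gives $\int \min(|u|,1)^p \,\rd\mu \leq \int |u|^p \,\rd\mu$; hence $\|g_\varepsilon - h_\varepsilon\|_{\ap} \leq \|g_\varepsilon - h_\varepsilon\|_p < \varepsilon/2$. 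The triangle inequality for the $\F$-norm then yields $\|f - h_\varepsilon\|_{\ap} < \varepsilon$, with $h_\varepsilon$ in the countable set $\mathcal{D}$. This proves $\mathcal{D}$ is dense, so $\Lambda_p(\R^d)$ is separable.

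I do not expect a genuine obstacle here: the argument is a two-step approximation (first into $L_p$, then within $L_p$ by a countable set), and the only point requiring care is noting that $\|\cdot\|_{\ap} \le \|\cdot\|_p$ so that $L_p$-approximation transfers to $\ap$-approximation. Everything else is standard: the separability of $L_p(\R^d)$ for finite $p$ is classical, and the triangle inequality is part of the $\F$-norm axioms (Proposition~\ref{prop_fnorm}). One could equally phrase the proof by directly taking the countable set to be rational-coefficient functions in a countable dense subset of $C_c^\infty(\R^d)$ and invoking Proposition~\ref{prop_smooth_dense} in place of Proposition~\ref{prop_Lp_dense}; the structure is identical.
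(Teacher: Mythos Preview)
Your proof is correct and is exactly the paper's approach spelled out in detail: the paper simply observes that $L_p(\R^d)$ is a dense separable subspace of $\Lambda_p(\R^d)$, which is precisely the two-step approximation you carry out using Proposition~\ref{prop_Lp_dense} and the inequality $\|\cdot\|_{\ap}\le\|\cdot\|_p$.
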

\begin{proof}
Simply note that $L_p(\R^d)$ is a dense separable subspace of $\Lambda_p(\R^d)$.
\end{proof}

\section{Local boundedness, local convexity and duality} \label{section_local}
Let $V$ be real vector space with an $\F$-norm $\| \cdot \|$. The open balls $B_r = \{f \in V \ | \ \| f\| < r \}$ centered at zero form a base of neighborhoods at zero.  A subset $B$ of $V$ is called \textit{norm bounded} if $\sup \{ \|f \| \ | \ f \in B\} < \infty$, and it is called \textit{topologically bounded} if for each neighborhood $U$ of zero there exists a positive number $t$ such that $tB \subseteq U$. If $\| \cdot \|$ is a norm, then its homogeneity property yields that these two boundedness notions are equivalent. Moreover, a subset $B$ is topologically bounded if, and only if, whenever $(f_n)$ is a sequence in $B$ and $(\lambda_n)$ is a sequence of scalars converging to zero, then $\| \lambda_n f_n \|$ also converges to zero \cite{kothe1969topological}. The space $V$ is called \textit{locally bounded} if it has a topologically bounded neighborhood of zero, and it is called \textit{locally convex} if it has a base of neighborhoods at zero consisting of convex neighborhoods. We denote by $V^*$ the dual space of $V$, that is, the space of all continuous linear functionals on $V$.
\par 
In the previous sections, we observed that $\Lambda_p(\R^d)$ shares some similarities with $L_p(\R^d)$. Since $L_p(\R^d)$ is a normed space, it is locally bounded and locally convex. Moreover, its dual is nontrivial as it is isomorphic to $L_q(\R^d)$ with $q = p/(p-1)$. In stark contrast to this, we prove that the balls in $\Lambda_p(\R^d)$ centered at $0$ are neither topologically bounded nor convex, and hence $\Lambda_p(\R^d)$ is neither locally bounded nor locally convex. Furthermore, the dual space of $\Lambda_p(\R^d)$ is trivial.\par 
\begin{proposition}
The space $\Lambda_p(\R^d)$ is not locally bounded.
\end{proposition}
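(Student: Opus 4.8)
The plan is to show that no neighborhood of $0$ in $\Lambda_p(\R^d)$ is topologically bounded; this suffices, since the open balls $B_r$ form a base of neighborhoods at $0$, and a subset of a topologically bounded set is again topologically bounded (immediate from the definition: if $tB\subseteq U$ and $C\subseteq B$ then $tC\subseteq U$), so local boundedness would force some $B_r$ to be topologically bounded. Thus I would fix $r>0$ arbitrarily and show that $B_r$ fails the sequential criterion recalled above, by exhibiting a sequence $(f_n)\subseteq B_r$ and scalars $\lambda_n\to 0$ for which $\|\lambda_n f_n\|_{\ap}\not\to 0$.

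The key point is that the truncation $\min(|\cdot|,1)$ in $\|\cdot\|_{\ap}$ renders the $\F$-norm insensitive to the size of a function on a fixed set: if $A\subseteq\R^d$ is a bounded Borel set with $0<\mu(A)<r^p$ (for instance a ball of suitable radius) and $c\geq 1$, then
\[
\|c\chi_A\|_{\ap}^p = \int_{\R^d} \min(c\chi_A,1)^p \, \dm = \mu(A) < r^p,
\]
so $c\chi_A\in B_r$ however large $c$ is. Accordingly I would take $f_n = n\chi_A$ and $\lambda_n = 1/n$. Each $f_n$ is bounded with compact support, hence lies in $L_p(\R^d)\subseteq\Lambda_p(\R^d)$, and by the above $f_n\in B_r$ for every $n\in\N$; moreover $\lambda_n\to 0$. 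However $\lambda_n f_n = \chi_A$ for every $n$, so $\|\lambda_n f_n\|_{\ap}^p = \mu(A) > 0$ does not converge to $0$. By the criterion, $B_r$ is not topologically bounded.

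Since $r>0$ was arbitrary and every neighborhood of $0$ contains some $B_r$, the space $\Lambda_p(\R^d)$ has no topologically bounded neighborhood of $0$ and is therefore not locally bounded. I do not expect a genuine obstacle here: the argument rests on the single observation that dividing $f_n = n\chi_A$ by $n$ returns the fixed function $\chi_A$, exploiting the truncation in the $\F$-norm; the only routine verifications are the reduction to balls and the membership $f_n\in\Lambda_p(\R^d)$.
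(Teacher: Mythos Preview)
Your proof is correct and follows essentially the same route as the paper: both exhibit $f_n=n\chi_{\text{set}}$ inside a given ball and observe that $\lambda_n=1/n$ fails to shrink the $\F$-norm because of the truncation. The only cosmetic difference is that you use a single fixed set $A$ whereas the paper takes a sequence of disjoint translated cubes $E_n$; the disjointness is irrelevant here (it is only exploited in the subsequent proof that $\Lambda_p(\R^d)$ is not locally convex), so your version is in fact slightly cleaner for this proposition alone.
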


\begin{proof}
Let $\varepsilon>0$. For each $n \in \N$, consider an open cube $E_n \subseteq \R^d$ given by
\[E_n = \left(n, n + \left(\frac\varepsilon 2\right)^{\frac pd} \right)^d 
\]
and note that $|E_n|=(\varepsilon/2)^p$. Let $f_n = n \chi_{E_n}$. Then 
\begin{align*}
\|f_n \|_{\ap}^p = \int_{\R^d} \min(n \chi_{E_n}, 1)^p \, \rd x = \int_{E_n} \min(n, 1)^p \, \rd x = |E_n| = \left( \frac \varepsilon 2 \right)^p
\end{align*}
and so, the sequence $(f_n)$ belongs to the ball in $\Lambda_p(\R^d)$ centered at $0$ with radius $\varepsilon$. Denote this ball by $B_\varepsilon$. If $B_\varepsilon$ were (topologically) bounded, then we would have $\|a_n f_n \|_{\ap} \to 0$ as $n \to \infty$ for any sequence of scalars $(a_n)$ converging to $0$. However, taking $a_n = 1/n$ we have
\begin{align*}
\|(1/n) f_n \|_{\ap}^p = \int_{\R^d} \min( \chi_{E_n}, 1)^p \, \rd x = |E_n| = \left( \frac \varepsilon 2 \right)^p
\end{align*}
which does not converge to $0$ as $n \to \infty$. This proves that $B_\varepsilon$ is not topologically bounded, and since $\varepsilon$ is arbitrary, the result follows.  
\end{proof}

\begin{proposition}
The space $\Lambda_p(\R^d)$ is not locally convex.
\end{proposition}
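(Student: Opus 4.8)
The plan is to argue by contradiction, using the standard ``absorbing'' argument for non--locally-convex $\F$-spaces. If $\Lambda_p(\R^d)$ were locally convex, it would possess a convex open neighborhood $U$ of $0$ contained in the unit ball $B_1 = \{f \in \Lambda_p(\R^d) : \|f\|_{\ap} < 1\}$; since the balls $B_r$ form a neighborhood base at $0$, there would be $r > 0$ with $B_r \subseteq U$, and hence $\operatorname{conv}(B_r) \subseteq \operatorname{conv}(U) = U \subseteq B_1$. Thus it suffices to exhibit a finite convex combination of elements of $B_r$ whose $\F$-norm $\|\cdot\|_{\ap}$ exceeds $1$.

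The mechanism is the failure of homogeneity of $\|\cdot\|_{\ap}$: because of the truncation, for any measurable set $E$ of finite measure and any $N \in \N$ one has $\|N\chi_E\|_{\ap}^p = \int_{\R^d} \min(N\chi_E,1)^p \, \rd x = |E|$, with no dependence on $N$. Fix a cube $A \subseteq \R^d$ with $|A| = 2$, so that $\chi_A \in L_p(\R^d) \subseteq \Lambda_p(\R^d)$ and $\|\chi_A\|_{\ap} = 2^{1/p} > 1$. Choosing $N \in \N$ with $N > 2/r^p$ and slicing $A$ into $N$ congruent slabs $A_1, \ldots, A_N$, each of measure $2/N$ (exactly as in the preceding proposition), we obtain $\|N\chi_{A_k}\|_{\ap} = (2/N)^{1/p} < r$, so each $N\chi_{A_k}$ lies in $B_r$. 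Yet $\chi_A = \frac{1}{N}\sum_{k=1}^N N\chi_{A_k}$ is a convex combination of these functions, whence $\chi_A \in \operatorname{conv}(B_r) \subseteq B_1$, contradicting $\|\chi_A\|_{\ap} > 1$. This contradiction shows $\Lambda_p(\R^d)$ is not locally convex; in the same way one sees that for every $\varepsilon > 0$ the convex hull of $B_\varepsilon$ contains characteristic functions of sets of arbitrarily large finite measure.

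The step that requires care is the logical one in the first paragraph: it is \emph{not} enough to observe that a particular ball $B_\varepsilon$ fails to be convex, because local convexity only requires the existence of \emph{some} base of convex neighborhoods, not that the balls themselves be convex; one must rule out that any convex neighborhood of $0$ be squeezed inside a given ball. The genuinely nontrivial ingredient is the non-homogeneity identity $\|N\chi_E\|_{\ap} = |E|^{1/p}$: scaling a generic $L_p$ function by $N$ would instead produce a factor $N^p$ inside the integral and the argument would collapse, so it is essential that we scale the characteristic functions of the pieces of the partition, where the truncation in $\min(\cdot,1)$ absorbs the scaling.
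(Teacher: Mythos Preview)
Your proof is correct and follows the same overall strategy as the paper's: argue by contradiction, sandwich a would-be convex neighborhood between two balls $B_r \subseteq C \subseteq B_R$, and then exhibit a convex combination of elements of $B_r$ that escapes $B_R$, exploiting the key identity $\|N\chi_E\|_{\ap}^p = |E|$ (the truncation absorbs the scalar).

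The construction, however, is genuinely different. The paper takes pairwise disjoint cubes $E_n \subseteq \R^d$ of measure $(\varepsilon/2)^p$, sets $f_n = n\chi_{E_n} \in B_\varepsilon$, and then computes $\|\tfrac{1}{K}\sum_{n=1}^K f_n\|_{\ap}^p = (\varepsilon/2)^p K^{-p}\sum_{n=1}^K n^p$, which diverges with $K$. You instead fix a single cube $A$ with $|A| = 2$, partition it into $N$ slabs $A_k$, and observe that $\chi_A = \tfrac{1}{N}\sum_k N\chi_{A_k}$ is already the convex combination you need, with $\|\chi_A\|_{\ap} = 2^{1/p} > 1$. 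Your version is cleaner: the convex combination collapses to a single characteristic function and no asymptotic estimate on $\sum n^p$ is needed; it is also the argument that transfers verbatim to $L_0$ on any nonatomic finite measure space. The paper's version, on the other hand, shows that the convex hull of any $B_\varepsilon$ contains elements of arbitrarily large $\F$-norm (not just $>1$), which is a marginally stronger quantitative statement. One minor remark: your parenthetical ``exactly as in the preceding proposition'' does not match the paper's construction there (which places disjoint cubes at integer translates rather than partitioning a fixed cube), so you may want to drop or rephrase that reference.
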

\begin{proof}
Suppose, toward a contradiction, that $\Lambda_p(\R^d)$ is locally convex. Then, given $R>0$, there exists a convex neighborhood $C$ of zero contained in the ball $B_R$ of radius $R$ centered at zero. Let $0 < \varepsilon < 2$ be such that 
\[B_\varepsilon \subseteq C \subseteq B_R \]
and let $f_n = n \chi_{E_n}$, $n\in \N$, be as in the proof of the previous result. Each $f_n$ belongs to $B_\varepsilon$, and therefore to $C$ as well. Let $K \in \N$ and define $g_K$ by
\[g_K = \frac{1}{K} \sum_{n = 1}^K f_n \, . \]
Since $\varepsilon < 2$, we have $E_n \cap E_m = \emptyset$ for $n \neq m$, and hence
\begin{align*}
\|g_K \|_{\ap}^p = \sum_{n=1}^K \int_{E_n} \min(n/K,1)^p \, \rd x = \left( \frac \varepsilon2 \right)^p \frac{1}{K^p} \sum_{n=1}^K n^p \, .
\end{align*}
Choosing $K$ large enough so that $(1/K^p)\sum_{n=1}^K n^p > R^p$ yields that $g_K$ does not belong to $B_R$, and hence does not belong to $C$. This contradicts the convexity of $C$ given that $g_K$ is a convex combination of some of its elements. Consequently, $\Lambda_p(\R^d)$ is not locally convex.
\end{proof}

\begin{proposition}
The dual space of $\Lambda_p(\R^d) $ is trivial, that is, $\Lambda_p(\R^d)^* = \{0 \}$.
\end{proposition}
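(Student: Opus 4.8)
The plan is to show that every continuous linear functional $\varphi$ on $\Lambda_p(\R^d)$ vanishes identically, exploiting the failure of homogeneity together with the truncation in the $\F$-norm: if $g$ is supported on a set of small measure, then $\|\lambda g\|_{\ap}$ stays small \emph{no matter how large $\lambda$ is}, since $\min(|\lambda g|,1)\le 1$. This lets one split a scaled function into many pieces, each of tiny $\F$-norm, and push $\varphi$ through the (finite) sum. Because $\R^d$ has infinite measure it cannot be partitioned into finitely many sets of small measure, so a preliminary reduction to functions whose support has finite measure is required; this is the only genuinely non-automatic point.

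\medskip

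\noindent\textbf{Reduction.} Fix $f\in\Lambda_p(\R^d)$ and, for $M\in\N$, put $Q_M=[-M,M]^d$. Since $\|f\|_{\ap}^p=\int_{\R^d}\min(|f|,1)^p\,\rd x<\infty$, dominated convergence gives $\|f-f\chi_{Q_M}\|_{\ap}^p=\int_{\R^d\setminus Q_M}\min(|f|,1)^p\,\rd x\to 0$ as $M\to\infty$. By continuity of $\varphi$ it therefore suffices to prove $\varphi(g)=0$ whenever $g\in\Lambda_p(\R^d)$ is supported in a cube $Q$ of finite measure $L=\mu(Q)$.

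\medskip

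\noindent\textbf{Averaging.} Fix such a $g$ and $N\in\N$, and partition $Q$ into congruent slabs $A_1,\dots,A_N$ with $\mu(A_i)=L/N$. Each $Ng\chi_{A_i}$ is a scalar multiple of $g\chi_{A_i}\in\Lambda_p(\R^d)$, hence lies in $\Lambda_p(\R^d)$, and
\[
\|Ng\chi_{A_i}\|_{\ap}^p=\int_{A_i}\min(N|g|,1)^p\,\rd x\le\mu(A_i)=\frac{L}{N}.
\]
Given $\eta>0$, continuity of $\varphi$ at $0$ provides $\delta>0$ with $|\varphi(h)|<\eta$ whenever $\|h\|_{\ap}<\delta$; now choose $N$ so large that $(L/N)^{1/p}<\delta$, so that $|\varphi(Ng\chi_{A_i})|<\eta$ for each $i$. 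Since $\{A_i\}$ covers the support of $g$, we have $Ng=\sum_{i=1}^N Ng\chi_{A_i}$ pointwise a.e., whence by linearity
\[
N\,|\varphi(g)|=\Bigl|\sum_{i=1}^N\varphi(Ng\chi_{A_i})\Bigr|\le\sum_{i=1}^N|\varphi(Ng\chi_{A_i})|<N\eta,
\]
so $|\varphi(g)|<\eta$. As $\eta>0$ is arbitrary, $\varphi(g)=0$; combined with the reduction, $\varphi(f)=\lim_{M\to\infty}\varphi(f\chi_{Q_M})=0$ for all $f$, so $\Lambda_p(\R^d)^*=\{0\}$.

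\medskip

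The step I expect to need the most care is the order of quantifiers in the averaging argument: the number of pieces $N$ must be chosen \emph{after} $\eta$ (equivalently after $\delta$), using that the bound $L/N$ on the $\F$-norm of each scaled piece tends to $0$ independently of how large $g$ is — this is exactly the place where the non-local-convexity established above is turned to our advantage. Beyond keeping this dependence straight (and the harmless verification that $g\chi_{A_i}\in\Lambda_p(\R^d)$), no real obstacle remains.
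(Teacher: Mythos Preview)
Your proof is correct. The route, however, differs from the paper's. The paper argues by contradiction from the preceding non-local-convexity proposition: if a nonzero $\varphi\in\Lambda_p(\R^d)^*$ existed, then for a proper open convex interval $I\ni 0$ the set $\varphi^{-1}(I)$ would be a proper open convex neighborhood of $0$, which the earlier proposition rules out. You instead give a direct, self-contained Day-type averaging argument: reduce to $g$ supported in a cube, split the cube into $N$ pieces $A_i$, and use the crucial estimate $\|Ng\chi_{A_i}\|_{\ap}^p\le\mu(A_i)=L/N$ (valid because $\min(\cdot,1)\le 1$ kills the scaling) to force $|\varphi(g)|<\eta$. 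This is exactly the mechanism behind the paper's non-local-convexity proof, so in effect you are unpacking that proposition inline rather than invoking it as a black box. Your version is more elementary and portable (it does not depend on the previous section), while the paper's is a two-line corollary once the convexity result is in hand. The reduction to functions of finite-measure support via $f\chi_{Q_M}\to f$ in $\|\cdot\|_{\ap}$ is the one genuinely new ingredient in your argument, and it is handled cleanly by dominated convergence as you indicate.
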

\begin{proof}
Assume, by contradiction, that there exists $\varphi \in \Lambda_p(\R^d)^*$ and $f \in \Lambda_p(\R^d)$ such that $\varphi(f) \neq 0$, and let $I \subsetneq \R$ be an open convex set containing zero. By linearity, $\varphi$ is surjective, and by continuity, the set $\varphi^{-1}(I)$ is open. Moreover, by linearity again, $\varphi^{-1}(I)$ is convex, and so $\varphi^{-1}(I) = \Lambda_p(\R^d)$ by the previous proposition, which contradicts the surjectivity. The result follows.
\end{proof}

\appendix
\section*{Appendix: $\F$-norms}
\setcounter{equation}{0}
\setcounter{theorem}{0}
\renewcommand\thesection{A}

We use the notions of $\F$-norm and $\F$-seminorm considered in \cite{kalton1984space, fremlin2003measure}.

\begin{definition} \label{def_fnorm}
Let $V$ be a real vector space. An $\F$-seminorm on $V$ is a functional $\| \cdot \| : V \to \R$ satisfying the following conditions:
\begin{enumerate}[(i)]
\item $\|f \| \geq 0 $ for every $f \in V$,
\vspace{1mm}
\item $\|\lambda f \| \leq \|f \|$ for each $\lambda \in \R$ with $|\lambda| \leq 1$ and every $f \in V$,
\vspace{1mm}
\item $\lim\limits_{\lambda \to 0} \|\lambda f \| = 0$ for every $f \in V$,
\vspace{1mm}
\item $\|f + g \| \leq \|f\| + \|g\|$ for every $f,g \in V$.
\end{enumerate}
An $\F$-norm $\| \cdot \|$ on $V$ is an $\F$-seminorm on $V$ such that $\|f \| > 0 $ for every $f \in V\setminus \{ 0\}$.
\end{definition}
Note that if $\|\cdot \|$ is an $\F$-norm on a real vector space $V$ then $\| f\| = 0$ if and only if $f = 0$. Additionally, an $\F$-norm $\| \cdot \|$ determines a metric $\mathrm{d}_\F$, given by $\mathrm{d}_\F(f,g) = \| f - g \|$, which is translation-invariant; that is,
$\mathrm{d}_\F(f + h, g + h) = \mathrm{d}_\F(f, g)$ for every $f,g,h \in V$. Moreover, it is clear that any norm on $V$ is an $\F$-norm. \par 

\begin{proposition} \label{prop_fnorm}
For $p\geq 1$, the functional $\| \cdot \|_{\ap} = \|\min(|\cdot|,1) \|_p$ is an $\mathrm{F}$-norm on $\Lambda_p(\mathrm{X})$.
\end{proposition}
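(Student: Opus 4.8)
The plan is to verify the four axioms of Definition~\ref{def_fnorm} together with the positivity requirement distinguishing an $\F$-norm from an $\F$-seminorm, and to check at the outset that the functional is finite on $\Lambda_p(\X)$. Finiteness follows from the defining property of $\Lambda_p(\X)$: given $f \in \Lambda_p(\X)$, pick $E$ with $\mu(E) < 1$ such that $f\chi_{E^c} \in L_p(\X)$; then $\int_{\X} \min(|f|,1)^p \, \dm = \int_{E} \min(|f|,1)^p \, \dm + \int_{E^c} \min(|f|,1)^p \, \dm \leq \mu(E) + \|f\chi_{E^c}\|_p^p < \infty$, so $\|f\|_{\ap} < \infty$.

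Axiom (i), nonnegativity, is immediate since $\min(|f|,1)^p \geq 0$. For axiom (ii), if $|\lambda| \leq 1$ then $|\lambda f| \leq |f|$ pointwise, hence $\min(|\lambda f|,1) \leq \min(|f|,1)$ pointwise, and monotonicity of the integral gives $\|\lambda f\|_{\ap} \leq \|f\|_{\ap}$. For axiom (iii), observe that $\min(|\lambda f|,1)^p \to 0$ pointwise as $\lambda \to 0$ and is dominated by the fixed integrable function $\min(|f|,1)^p$; the dominated convergence theorem then yields $\lim_{\lambda \to 0}\|\lambda f\|_{\ap} = 0$ (it suffices to check this along sequences $\lambda_n \to 0$, which is enough for the stated limit). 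For axiom (iv), the triangle inequality, the key pointwise estimate is $\min(|f+g|,1) \leq \min(|f|,1) + \min(|g|,1)$: indeed if $|f+g| \geq 1$ the right-hand side is at least $1$ unless both $|f|,|g| < 1$, in which case $|f|+|g| \geq |f+g| \geq 1$; and if $|f+g| < 1$ then $\min(|f+g|,1) = |f+g| \leq |f| + |g|$, and since the left side is below $1$ we may replace each term on the right by its truncation without decreasing it. Combining this pointwise bound with the Minkowski inequality in $L_p$ gives $\|f+g\|_{\ap} = \|\min(|f+g|,1)\|_p \leq \|\min(|f|,1) + \min(|g|,1)\|_p \leq \|\min(|f|,1)\|_p + \|\min(|g|,1)\|_p = \|f\|_{\ap} + \|g\|_{\ap}$. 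Finally, positivity: if $f \neq 0$ in $\Lambda_p(\X)$, i.e. $\mu(|f| > 0) > 0$, then $\min(|f|,1)^p > 0$ on a set of positive measure, so $\|f\|_{\ap} > 0$.

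I do not anticipate a serious obstacle here: the only step requiring a moment's care is the pointwise subadditivity of $t \mapsto \min(t,1)$ composed with the norm, handled by the short case analysis above, and the routine justification that the truncation $\min(\cdot,1)$ preserves measurability so that all integrals are well-defined. Everything else is a direct transcription of the $L_p$ triangle inequality and elementary monotonicity and dominated-convergence arguments.
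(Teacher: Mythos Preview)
Your proof is correct and follows the same overall plan as the paper's: establish finiteness via the defining property of $\Lambda_p(\X)$, then verify each axiom in turn, with the triangle inequality reduced to the pointwise subadditivity $\min(|f+g|,1) \leq \min(|f|,1)+\min(|g|,1)$ followed by Minkowski. The one substantive difference is in axiom~(iii). The paper re-invokes the almost-$L_p$ structure: for each $\varepsilon>0$ it picks $E_\varepsilon$ with $\mu(E_\varepsilon)<\varepsilon^p/2$ and $\int_{E_\varepsilon^c}|f|^p\,\dm = C_\varepsilon<\infty$, then estimates $\|\lambda f\|_{\ap}^p \leq |\lambda|^p C_\varepsilon + \mu(E_\varepsilon)$ directly. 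You instead note that $\min(|\lambda f|,1)^p \leq \min(|f|,1)^p$ for $|\lambda|\leq 1$ and apply dominated convergence, using only the integrability of $\min(|f|,1)^p$ already established in the finiteness step. Your route is marginally cleaner, since it avoids a second appeal to the $\Lambda_p$ definition; the paper's route is more self-contained in that it does not invoke DCT. Your handling of axiom~(ii) is likewise a mild streamlining, using monotonicity of $t\mapsto\min(t,1)$ in one line where the paper carries out a short case split; the content is identical.
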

\begin{proof}
First, we prove that $\| \cdot \|_{\ap} = \| \min(|\cdot|,1)\|_p$ is well defined on $\Lambda_p(\X)$, that is, $\| f \|_{\ap} < \infty$ for every $f \in \Lambda_p(\X)$. Given $f \in \Lambda_p(\X)$, let $E \subseteq \X$ be a measurable set with $\mu(E) < 1$ so that  $\int_{E^c} |f|^p \, \mathrm{d}\mu = C < \infty$. Then 
\begin{align*}
\|f \|_{\ap} & = \int_X \min(|f|,1)^p \, \mathrm{d}\mu \\
& \leq \int_{E^c} |f|^p \, \mathrm{d}\mu + \mu(E) \\
& < C + 1 < \infty \, .
\end{align*}
In addition, it is clear that $\| \cdot \|_{\ap}$ is nonnegative.  Regarding condition (i) of the definition, if $f$ in $\Lambda_p(\X)$ is such that $\| f\|_{\ap} = 0$, then $\min(|f|,1) = 0$ almost everywhere, which implies that $f = 0$. 
\par 
We proceed to condition (ii). Let $f \in \Lambda_p(\X)$ and $\lambda \in \R$ with $|\lambda| \leq 1$. Given $x \in \X$, if $|f(x)| < 1$, then $|\lambda f(x)| \leq 1$ and so \[\min(|\lambda f(x)|,1) = |\lambda f(x)| \leq |f(x)| = \min(|f(x)|,1) \, . \]
On the other hand, if $|f(x)| > 1$ then either $|\lambda f(x)| > 1$ or $|\lambda f(x)| \leq 1$. Assuming the former we have \[\min(|\lambda f(x)|,1) = 1 = \min(|f(x)|,1)\] while assuming the latter it holds \[\min(|\lambda f(x)|,1) = |\lambda f(x)| \leq 1 = \min(|f(x)|,1) \, .\] Applying the $L_p$ norm to both sides of the inequality $\min(|\lambda f|,1) \leq \min(|f|,1)$ yields the desired conclusion.
\par 
Next, we establish condition (iii). Let $f \in \Lambda_p(\X)$ and $\varepsilon > 0$. There exists a measurable set $E_\varepsilon$ with $\mu(E_\varepsilon) < \varepsilon^p/2$ such that $\int_{E_\varepsilon^c} |f|^p \, \dm = C_\varepsilon < \infty$. Then 
\begin{align*}
\| \lambda f \|_{\ap}^p & \leq  \int_{E_\varepsilon^c} |\lambda f|^p \, \dm + \mu(E_\varepsilon) \\
& < |\lambda|^p C_\varepsilon + \frac{\varepsilon^p}{2} \, .
\end{align*}
Choosing $\delta_\varepsilon = \varepsilon^p/(2C_\varepsilon)$ we see that for $|\lambda| < \delta_\varepsilon$ one has $\| \lambda f \|_{\ap} < \varepsilon$.
\par 
Finally, we provide a proof of the triangle inequality. Let $x \in \X$ and $f,g \in \Lambda_p(\X)$. If $\min(|f(x)|,1) < 1$ and $\min(|g(x)|,1) < 1$ then 
\begin{align*}
 \min(|f(x) + g(x)|,1)  & \leq |f(x) + g(x)| \\
 & \leq |f(x)| + |g(x)| \\
                       & = \min(|f(x) |,1) + \min(|g(x)|,1) \, .
\end{align*}
Now, suppose that either $\min(|f(x)|,1) \geq 1$ or $\min(|g(x)|,1) \geq 1$. Assume the former (the latter being similar). Then
\begin{align*}
\min(|f(x) + g(x)|,1) & \leq 1 + \min(| g(x)|,1) \\
                      & \leq \min(|f(x) |,1) + \min(|g(x)|,1) \, .
\end{align*}
Thus, the triangle inequality in $\|\cdot \|_{\ap}$ follows from the triangle inequality in $\|  \cdot \|_p$.
\end{proof}
\begin{corollary} \label{cor_fseminorm}
For $p \geq 1$ and $F \in \Sigma$ with $\mu(F) < \infty$, the functional $\| \cdot\|_{\ap,F} = \| \min(|\cdot|,1) \chi_F\|_p$ is an $\F$-seminorm on $L_0(\X)$.
\end{corollary}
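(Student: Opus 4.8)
The plan is to obtain Corollary~\ref{cor_fseminorm} as an immediate consequence of Proposition~\ref{prop_fnorm}, exploiting the pointwise identity $\min(|f\chi_F|,1) = \min(|f|,1)\chi_F$, which gives $\|f\|_{\ap,F} = \|f\chi_F\|_{\ap}$ for every measurable $f$. In other words, the functional $f \mapsto \|f\|_{\ap,F}$ on $L_0(\X)$ is the composition of the linear map $T\colon f \mapsto f\chi_F$ with the $\F$-norm $\|\cdot\|_{\ap}$ on $\Lambda_p(\X)$; since precomposition of an $\F$-seminorm with a linear map is again an $\F$-seminorm, the four axioms of Definition~\ref{def_fnorm} will be inherited, provided we first check that $T$ is well defined, i.e.\ that $f\chi_F \in \Lambda_p(\X)$ whenever $f \in L_0(\X)$. (The resulting functional is only a seminorm rather than a norm precisely because $T$ is not injective.)

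The first and essentially only substantive step is therefore to verify $f\chi_F \in \Lambda_p(\X)$ for $\mu(F) < \infty$. This is the analog, for the truncation $f\chi_F$, of Lemma~\ref{lem_L0=Lambda_finite}, and I would argue along the same lines: since $f \in L_0(\X)$ is finite $\mu$-a.e., the sets $E_n \coloneq \{|f| > n\} \cap F$ decrease, as $n \to \infty$, to a $\mu$-null set, so $\mu(E_n) \to 0$ by continuity from above (legitimate because $\mu(F) < \infty$). Given $\delta > 0$, choose $n$ with $\mu(E_n) < \delta$; then $(f\chi_F)\chi_{E_n^c}$ vanishes off $F$ and is bounded by $n$ on $F$, whence $\int_\X |(f\chi_F)\chi_{E_n^c}|^p \, \dm \leq n^p \mu(F) < \infty$. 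This shows $f\chi_F \in \Lambda_p(\X)$, and in particular $\|f\|_{\ap,F} = \|f\chi_F\|_{\ap}$ is finite.

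With well-definedness secured, the remaining verifications are routine transfers through $T$. Nonnegativity is immediate. For $\lambda \in \R$ with $|\lambda| \leq 1$, the identity $(\lambda f)\chi_F = \lambda(f\chi_F)$ together with the fact (Proposition~\ref{prop_fnorm}) that $\|\cdot\|_{\ap}$ satisfies condition~(ii) of Definition~\ref{def_fnorm} gives $\|\lambda f\|_{\ap,F} = \|\lambda(f\chi_F)\|_{\ap} \leq \|f\chi_F\|_{\ap} = \|f\|_{\ap,F}$; similarly condition~(iii) yields $\lim_{\lambda \to 0}\|\lambda f\|_{\ap,F} = 0$; and the triangle inequality follows from $(f+g)\chi_F = f\chi_F + g\chi_F$ and the triangle inequality for $\|\cdot\|_{\ap}$. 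I would also remark explicitly that one should \emph{not} expect positivity on $L_0(\X) \setminus \{0\}$ — any nonzero $f$ with $\mu(\{f \neq 0\} \cap F) = 0$ has $\|f\|_{\ap,F} = 0$ — which is exactly why the statement asserts only an $\F$-seminorm. The main (and essentially sole) obstacle is the measure-theoretic step showing $f\chi_F \in \Lambda_p(\X)$; everything else is bookkeeping built on Proposition~\ref{prop_fnorm}.
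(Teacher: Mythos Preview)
Your proposal is correct and matches the paper's intent: the paper states this result as a corollary of Proposition~\ref{prop_fnorm} with no explicit proof, and your argument --- factoring $\|\cdot\|_{\ap,F}$ as $\|\cdot\|_{\ap}\circ T$ with $T\colon f\mapsto f\chi_F$, after verifying $f\chi_F\in\Lambda_p(\X)$ via the finite-measure argument of Lemma~\ref{lem_L0=Lambda_finite} --- is precisely the natural way to spell out that deduction.
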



\end{document}